\documentclass[a4paper]{amsart}
\pdfoutput=1
\usepackage[pdftex,
            pdfauthor={Alexis Marchand and Piotr W. Nowak},
            pdftitle={On vanishing and Hausdorffness of asymptotic cohomology},
            colorlinks=true]{hyperref}
\usepackage[T2A,T1]{fontenc}
\usepackage[english]{babel}
\usepackage[dvipsnames]{xcolor}
\usepackage{graphicx}
\usepackage{graphics}
\usepackage{enumitem}
\usepackage{amsmath,amsthm,amssymb,mathrsfs,eqnarray,relsize,bm,mathtools,dsfont}
\usepackage[msc-links]{amsrefs}
\usepackage{caption,subcaption}

\usepackage{todonotes,comment}

\usepackage{tikz}
\usetikzlibrary{cd}

\AddEnumerateCounter{\fnsymbol}{\@fnsymbol}{2}

\theoremstyle{plain}
\newtheorem{prop}{Proposition}
\numberwithin{prop}{section}

\newtheorem{lemm}[prop]{Lemma}

\newtheorem*{claim*}{Claim}

\newtheorem{innermthm}{Theorem}
\newenvironment{mthrm}[1]
  {\renewcommand\theinnermthm{#1}\innermthm}
  {\endinnermthm}

\newtheorem{innermcor}{Corollary}
\newenvironment{mcoro}[1]
  {\renewcommand\theinnermcor{#1}\innermcor}
  {\endinnermcor}

\theoremstyle{definition}
\newtheorem{defi}[prop]{Definition}

\theoremstyle{remark}
\newtheorem{remk}[prop]{Remark}

\numberwithin{equation}{section}

\renewcommand{\emptyset}{\varnothing}

\DeclareMathOperator{\Ker}{Ker}
\DeclareMathOperator{\Imm}{Im}
\DeclareMathOperator{\id}{id}
\DeclareMathOperator{\Isom}{Isom}

\DeclareMathOperator{\diff}{d}

\DeclareMathOperator{\T}{(T)}
\DeclareMathOperator{\homog}{hom}
\DeclareMathOperator{\inhom}{bar}

\DeclareMathOperator{\maxdist}{maxdist}
\DeclareMathOperator{\dHaus}{d_\mathcal{H}}

\title[On vanishing and Hausdorffness of asymptotic cohomology]{On vanishing and Hausdorffness of asymptotic cohomology}
\date{\today}
\author{Alexis Marchand}
\address{Instytut Matematyczny Polskiej Akademii Nauk, Śniadeckich 8, 00-656 Warszawa, Poland}
\email{\href{mailto:amarchand@impan.pl}{amarchand@impan.pl}}
\author{Piotr W. Nowak}
\address{Instytut Matematyczny Polskiej Akademii Nauk, Śniadeckich 8, 00-656 Warszawa, Poland}
\email{\href{mailto:pnowak@impan.pl}{pnowak@impan.pl}}

\begin{document}

\begin{abstract}
    Asymptotic cohomology was recently introduced to study stability problems in group theory.
    This paper studies vanishing and Hausdorffness questions around asymptotic cohomology.
    Our first main result is that, in a fixed degree and for a fixed asymptotic coefficient module, vanishing of asymptotic cohomology is equivalent to vanishing of reduced asymptotic cohomology.
    In the special case of diagonal coefficient modules, we also obtain a full characterisation of vanishing in terms of bounded cohomology.
    Our second main result is that asymptotic cohomology vanishes in degree $1$ for uniformly convex asymptotic Banach modules.
    From this, we can deduce Hausdorffness of degree-$2$ bounded cohomology under the same uniform convexity assumption.
\end{abstract}

\maketitle

\section{Introduction}

    Given a cohomology theory $H^\bullet$ for groups, one can often gain a lot of information about a group $\Gamma$ from the knowledge that certain cohomology groups $H^n(\Gamma;E)$ vanish.
    In this vein, the Delorme--Guichardet Theorem characterises Kazhdan's property $\T{}$ in terms of vanishing of unitary cohomology in degree $1$ for all unitary representations \cites{delorme,guichardet:dgthm,bdlhv}; similarly, Johnson's Theorem characterises amenability in terms of vanishing of bounded cohomology in degree $1$ and higher for all dual Banach representations \cites{johnson,frigerio}.

    If the cohomology groups of $H^\bullet$ have a topological structure, one can consider the \emph{reduced cohomology} $\bar{H}^\bullet$, which can be defined as the largest quotient of $H^\bullet$ whose topology is Hausdorff.
    It can then become easier to prove vanishing for $\bar{H}^\bullet$ than for $H^\bullet$.
    For instance, Shalom proved that vanishing of reduced unitary cohomology in degree $1$ for all unitary representations is enough to imply property $\T$ \cites{shalom,bdlhv}; several applications are explained in \cite{bdlhv}.
    Shalom's result was recently generalised by Bader and Sauer \cites{bader-sauer:unitary} to the context of higher-rank property $\T$.
    
    One can also ask when $H^\bullet=\bar{H}^\bullet$, which amounts to the quotient topology on $H^\bullet$ being Hausdorff; some authors say that $H^\bullet$ is \emph{reduced} when this equality holds, though we will stick to the term \emph{Hausdorff}.
    In fact, property $\T$ is also equivalent to Hausdorffness of unitary cohomology in degree $1$ for all unitary representations \cite{bdlhv}; see \cite{bader-nowak:group-alg} for higher-rank analogues of this condition.

    \medskip

    \emph{Asymptotic cohomology} was recently introduced by Glebsky, Lubotzky, Monod, and Rangarajan \cite{glmr}.
    The main motivation is that vanishing of asymptotic cohomology with certain coefficients implies stability properties for groups.
    We recall the definition of asymptotic cohomology in \S{}\ref{subsec:asymp-coh}; see also \cite{ffr} for another application to stability problems.
    The first main theorem of this paper is a strong reducedness result for asymptotic cohomology. 

	\begin{mthrm}{\ref{thrm:charact-vanish-asymp-gen}}
        \label{thrm:charact-vanish-asymp-gen-intro}
        Let $\Gamma$ be a discrete group and $\mathcal{E}\coloneqq(E_k)_{k\in\mathbb{N}}$ be an asymptotic Banach $\Gamma$-module.
   		For fixed $n\in\mathbb{N}$, the following assertions are equivalent:
		\begin{enumerate}
			\item\label{thrm:charact-vanish-asymp-gen-intro:3} $\bar{H}^n_a(\Gamma;\mathcal{E})=0$.
			\item\label{thrm:charact-vanish-asymp-gen-intro:2} $H^n_a(\Gamma;\mathcal{E})=0$.
			\item\label{thrm:charact-vanish-asymp-gen-intro:1} $H^n_a(\Gamma;\mathcal{E})=0$ and $H^{n+1}_a(\Gamma;\mathcal{E})$ is Hausdorff.
		\end{enumerate}
	\end{mthrm}

    The proof of Theorem \ref{thrm:charact-vanish-asymp-gen-intro} relies on ultraproduct techniques invented by Bader and Sauer in the context of unitary cohomology \cites{bader-sauer:below-rk,bader-sauer:unitary}.
    The general idea is that the knowledge that a codifferential map in cohomology has non-closed image can be used to construct non-vanishing cohomology classes in the ultrapower, via the Open Mapping Theorem.
    However, in our setting, we work with a general ultraproduct, not necessarily an ultrapower; hence, we need to quantify and asymptotically control the constants coming from the Open Mapping Theorem.
    This is done via an invariant that we call the \emph{closedness constant} of a bounded linear map --- see Section \ref{sec:cl-constant} for the definition and more details.

    We stress that the equivalence $\ref{thrm:charact-vanish-asymp-gen-intro:3}\Leftrightarrow\ref{thrm:charact-vanish-asymp-gen-intro:2}$ in Theorem \ref{thrm:charact-vanish-asymp-gen-intro} is surprising: it says that asymptotic cohomology vanishes if and only if \emph{reduced} asymptotic cohomology vanishes, even with a fixed coefficient module.
    The analogous statement in unitary cohomology does not hold, and work of Shalom, Bader, and Sauer \cites{shalom,bader-sauer:unitary} merely says that, in a fixed degree, vanishing of cohomology \emph{for all unitary representations} is equivalent to vanishing of reduced cohomology for all unitary representations.

    Theorem \ref{thrm:charact-vanish-asymp-gen-intro} might make it easier to prove vanishing of asymptotic cohomology, and hence stability results, in certain situations.

    \medskip

    \emph{Bounded cohomology} first emerged in work of Johnson and Trauber \cite{johnson}, and was later popularised by Gromov \cite{gromov}; it is now known to play a role in many parts of group theory and low-dimensional topology --- see for example \cite{campagnolo:hb} and references therein for introductions to various such topics.
    Refining the proof of Theorem \ref{thrm:charact-vanish-asymp-gen-intro} to the case of diagonal coefficient modules $\mathcal{E}=(E)_{k\in\mathbb{N}}$ given by the data of a single Banach $\Gamma$-module $E$, we obtain a characterisation of vanishing of diagonal asymptotic cohomology in terms of bounded cohomology.
    
    \begin{mthrm}{\ref{thrm:vanish-diag}}
        \label{thrm:vanish-diag-intro}
   		Let $\Gamma$ be a discrete group, $E$ a Banach $\Gamma$-module.
   		For $n\in\mathbb{N}$, the following assertions are equivalent:
		\begin{enumerate}
			\item\label{thrm:vanish-diag-intro:ha} $H^n_a(\Gamma;E)=0$.
			\item\label{thrm:vanish-diag-intro:hb} $H^n_b(\Gamma;E)=0$ and $H^{n+1}_b(\Gamma;E)$ is Hausdorff.
		\end{enumerate}
    \end{mthrm}

    We point out that our notations differ from those of \cite{glmr}: what we denote by $H^\bullet_a(\Gamma;E)$ is the asymptotic cohomology of $\Gamma$ with coefficients in the internal ultrapower of $E$ in the language of \cite{glmr} --- see Remark \ref{remk:compare-setups-glmr} regarding the difference between our setup and theirs.
    
    The implication $\ref{thrm:vanish-diag-intro:hb}\Rightarrow\ref{thrm:vanish-diag-intro:ha}$ of Theorem \ref{thrm:vanish-diag-intro} was proved and used extensively in \cite{glmr} (at least when $n=2$ and $\Gamma$ acts trivially on $E$, though the same proof works in full generality).
    In particular, Theorem \ref{thrm:vanish-diag-intro} says that the \emph{$2\frac{1}{2}$-condition} of \cite{glmr} is in fact equivalent to vanishing of $H^2_a(\Gamma;\mathbb{R})$.

    Theorem \ref{thrm:vanish-diag-intro} suggests that diagonal asymptotic cohomology may be a relevant tool to study Hausdorffness of bounded cohomology.

    \medskip

    Specialising to low degrees, we indeed apply Theorem \ref{thrm:vanish-diag-intro} to deduce Hausdorffness of bounded cohomology for a large class of coefficient modules.
    It is well-known that bounded cohomology vanishes in degree $1$ whenever the coefficient module is reflexive \cites{monod,johnson}.
    We show an analogous vanishing result for asymptotic cohomology in degree $1$ under a uniform convexity assumption.
    
    \begin{mthrm}{\ref{thrm:vanishing-h1a}}
        \label{thrm:vanishing-h1a-intro}
        Let $\Gamma$ be a discrete group and let $\mathcal{E}\coloneqq(E_k)_{k\in\mathbb{N}}$ be a uniformly convex asymptotic Banach $\Gamma$-module.
        Then $H_a^1(\Gamma;\mathcal{E})=0$.
    \end{mthrm}

    One proof of vanishing of bounded cohomology in degree $1$ relies on the centre of gravity, or \emph{Chebyshev centre}, of a bounded set in a uniformly convex Banach space.
    In order to adapt this proof to asymptotic cohomology, we need to show that the Chebyshev centre satisfies some continuity property, as our bounded subsets are no longer $\Gamma$-invariant, but almost invariant.
    This is the key ingredient in the proof of Theorem \ref{thrm:vanishing-h1a-intro} --- see Lemma \ref{lemm:centre}.

    It then follows from Theorems \ref{thrm:vanish-diag-intro} and \ref{thrm:vanishing-h1a-intro} that bounded cohomology is Hausdorff in degree $2$ for uniformly convex coefficient modules.
    
    \begin{mcoro}{\ref{coro:h2b-hausdorff}}
        \label{coro:h2b-hausdorff-intro}
        Let $\Gamma$ be a discrete group and let $E$ be a uniformly convex Banach $\Gamma$-module.
        Then $H_b^2(\Gamma;E)$ is Hausdorff.
    \end{mcoro}
    
    Corollary \ref{coro:h2b-hausdorff-intro} generalises a now classical theorem proved independently by Matsumoto and Morita \cite{matsumoto-morita} and Ivanov \cite{ivanov} on Hausdorffness of $H^2_b(\Gamma;\mathbb{R})$.
    Monod showed Hausdorffness of $H_b^2(\Gamma;E)$ for every separable Banach $\Gamma$-module $E$ under the condition that $\Gamma$ is finitely generated (or compactly generated locally compact second countable if one goes beyond the realm of discrete groups) \cite{monod}*{Corollary 11.4.2}; our proof is quite different from Monod's and, as far as we are aware, Hausdorffness was not previously known when $\Gamma$ is discrete but not finitely generated or when $E$ is uniformly convex but non-separable.
    
    \subsection*{Outline of the paper}
    We start in Section \ref{sec:bckgnd} by recalling the definitions of bounded cohomology, ultraproducts, and asymptotic cohomology.
    Section \ref{sec:cl-constant} is devoted to the closedness constant, which is our quantitative measure of the property of having closed image, and which we need in order to work with general ultraproducts.
    The proof of Theorem \ref{thrm:charact-vanish-asymp-gen} is completed in Section \ref{sec:vanish}; it relies on a generalised quantitative version of a lemma of Bader and Sauer which summarises their ultraproduct method; Theorem \ref{thrm:vanish-diag} on the diagonal case is deduced at the end of the section.
    Finally, Section \ref{sec:low-deg} is devoted to vanishing of asymptotic cohomology in degree $1$ (Theorem \ref{thrm:vanishing-h1a}), using a continuity result for the Chebyshev centre in uniformly convex Banach spaces; we then deduce Hausdorffness of bounded cohomology in degree $2$ (Corollary \ref{coro:h2b-hausdorff}).

    \subsection*{A remark on notations}
    Throughout this paper, the integer $n$ is always used to denote degrees in chain complexes and cohomology, while $k$ indexes sequences of coefficient modules.
    Limits along an ultrafilter should always be understood with respect to the variable $k$.

    \subsection*{Acknowledgements}
    The authors would like to thank Francesco Fournier-Facio, Clara Löh, Masato Mimura, and Roman Sauer for helpful conversations.
    
    Both authors were supported by NCN grant Maestro 13 \emph{Analysis on Groups} 2021/42/A/ST1/00306, and would also like to thank the Isaac Newton Institute for Mathematical Sciences, Cambridge, for support and hospitality during the programme \emph{Operators, Graphs, Groups}, where work on this paper was undertaken.
    This work was supported by EPSRC grant EP/Z000580/1.
    In the final phase of the preparation of this paper, the first-named author was based at Universität Regensburg with support from the Alexander von Humboldt-Stiftung; he would like to thank both for the hospitality and support.

    \subsection*{Statement on LLM usage}
    No large language model was used at any point in the preparation of this paper.

\section{Background}
    \label{sec:bckgnd}
	
	\subsection{Bounded cohomology}\label{subsec:hb}
	
	Let $\Gamma$ be a discrete group, and let $E$ be a Banach $\Gamma$-module, i.e. a Banach space together with an action of $\Gamma$ by isometries.	
	The \emph{bounded homogeneous cochain complex} of $E$ is
	\[
		0\rightarrow C^0_{b,\homog}\left(\Gamma;E\right)\xrightarrow{{\diff}^1}C^1_{b,\homog}\left(\Gamma;E\right)\xrightarrow{{\diff}^2}\cdots\xrightarrow{{\diff}^n}C^n_{b,\homog}\left(\Gamma;E\right)\xrightarrow{{\diff}^{n+1}}\cdots,
	\]
	where
	\begin{itemize}
		\item $C^n_{b,\homog}\left(\Gamma;E\right)\coloneqq\ell^\infty\left(\Gamma^{n+1},E\right)^\Gamma$, where $\ell^\infty(\Gamma^{n+1},E)$ denotes the space of uniformly bounded set-theoretic maps $\Gamma^{n+1}\rightarrow E$, equipped with the $\Gamma$-action given by
		\begin{multline}\label{eq:def-action-homog}
			(\gamma\cdot f):\left(\gamma_0,\dots,\gamma_n\right)\in\Gamma^{n+1}\longmapsto\gamma\cdot f\left(\gamma^{-1}\gamma_0,\dots,\gamma^{-1}\gamma_n\right)
			\\\textrm{for $\gamma\in\Gamma$ and $f\in\ell^\infty\left(\Gamma^{n+1},E\right)$},
		\end{multline}
		and $(-)^\Gamma$ denotes the subspace of $\Gamma$-invariant vectors, and
		\item The $n$-th codifferential ${\diff}^{n}:C^{n-1}_{b,\homog}\left(\Gamma;E\right)\rightarrow C^{n}_{b,\homog}(\Gamma;E)$ is given by
		\begin{align}\label{eq:def-codiff-homog}
			&{\diff}^{n}f:\left(\gamma_0,\dots,\gamma_{n}\right)\longmapsto\sum_{i=0}^n(-1)^if\left(\gamma_0,\dots,\hat{\gamma}_i,\dots,\gamma_{n}\right)
			&\textrm{for $f\in\ell^\infty\left(\Gamma^{n},E\right)^\Gamma$},
		\end{align}
		where $\hat{\cdot}$ denotes omission.
	\end{itemize}
    Note that each cochain group $C^n_{b,\homog}\left(\Gamma;E\right)$ has the structure of a Banach space with the $\ell^\infty$-norm, which we denote by $\|\cdot\|$.
	
	\begin{defi}\label{defi:gp-cohom}
		Let $E$ be a Banach $\Gamma$-module.
		\begin{enumerate}      
			\item The \emph{bounded cohomology} of $\Gamma$ with $E$-coefficients is defined in degree $n$ by
			\[         
				H^n_b(\Gamma;E)\coloneqq H^n\left(C^\bullet_{b,\homog}(\Gamma;E)\right)=\Ker {\diff}^{n+1}/\Imm {\diff}^n.
			\]
			\item The \emph{reduced bounded cohomology} of $\Gamma$ with $E$-coefficients is defined in degree $n$ by
			\[         
				\bar{H}_b^n(\Gamma;E)\coloneqq\bar{H}^n\left(C^\bullet_{b,\homog}(\Gamma;E)\right)=\Ker {\diff}^{n+1}/\overline{\Imm {\diff}^n}.
			\] 
		\end{enumerate}
	\end{defi}
	
	Instead of the homogeneous cochain complex, one can use an isomorphic cochain complex to compute bounded cohomology: the \emph{bounded bar cochain complex} of $E$ is
	\[   
		0\rightarrow C^0_{b,\inhom}\left(\Gamma;E\right)\xrightarrow{{\diff}^1}C^1_{b,\inhom}\left(\Gamma;E\right)\xrightarrow{{\diff}^2}\cdots\xrightarrow{{\diff}^n}C^n_{b,\inhom}\left(\Gamma;E\right)\xrightarrow{{\diff}^{n+1}}\cdots,
	\]
	where
	\begin{itemize}
		\item $C^n_{b,\inhom}\left(\Gamma;E\right)\coloneqq\ell^\infty\left(\Gamma^{n},E\right)$, and
		\item The $n$-th codifferential ${\diff}^{n}:C^{n-1}_{b,\inhom}\left(\Gamma;E\right)\rightarrow C^{n}_{b,\inhom}(\Gamma;E)$ is given by
		\begin{multline}\label{eq:def-codiff-bar}
			{\diff}^{n}f:\left(\gamma_1,\dots,\gamma_n\right)
			\longmapsto \gamma_1\cdot f\left(\gamma_2,\dots,\gamma_n\right)
			\\+\sum_{i=1}^{n-1}(-1)^if\left(\gamma_1,\dots,\gamma_i\gamma_{i+1},\dots,\gamma_{n}\right)
			+(-1)^nf\left(\gamma_1,\dots,\gamma_{n-1}\right)
			\\\textrm{for $f\in\ell^\infty\left(\Gamma^{n-1},E\right)$}.
		\end{multline}
	\end{itemize}
    Again, we denote by $\|\cdot\|$ the $\ell^\infty$ norm on $C^n_{b,\inhom}\left(\Gamma;E\right)$, which thus becomes a Banach space.

    It is a standard fact that the cochain complexes $C^\bullet_{b,\homog}(\Gamma;E)$ and $C^\bullet_{b,\inhom}(\Gamma;E)$ are isomorphic, and therefore they can both be used to compute (reduced) bounded cohomology \cite{frigerio}*{\S{}1.7}.
	
	Note that each (bounded) cohomology group is a quotient of a Banach space, and naturally inherits a topology from that Banach space.
	We will be interested in the property of this topology being Hausdorff, and we record below some elementary characterisations of this property.
	
	\begin{prop}\label{prop:charact-hausd}
		Let $\Gamma$ be a discrete group, $E$ a Banach $\Gamma$-module.
		For $n\in\mathbb{N}$, the following assertions are equivalent:
		\begin{enumerate}      
			\item $H^n_b(\Gamma;E)$ is Hausdorff.\label{prop:charact-hausd:1}
			\item The natural map $H^n_b(\Gamma;E)\rightarrow\bar{H}^n_b(\Gamma;E)$ is an isomorphism.\label{prop:charact-hausd:2}
			\item The $n$-th codifferential ${\diff}^{n}:C^{n-1}_{b,\homog}\left(\Gamma;E\right)\rightarrow C^{n}_{b,\homog}(\Gamma;E)$ has closed image.\label{prop:charact-hausd:3}
			\item The $n$-th codifferential ${\diff}^{n}:C^{n-1}_{b,\inhom}\left(\Gamma;E\right)\rightarrow C^{n}_{b,\inhom}(\Gamma;E)$ has closed image.\label{prop:charact-hausd:4}
		\end{enumerate}
	\end{prop}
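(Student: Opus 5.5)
I will prove the cycle of implications \ref{prop:charact-hausd:1}$\Leftrightarrow$\ref{prop:charact-hausd:3}$\Leftrightarrow$\ref{prop:charact-hausd:2} and then \ref{prop:charact-hausd:3}$\Leftrightarrow$\ref{prop:charact-hausd:4}. Everything is elementary point-set topology, together with the isomorphism between the homogeneous and bar complexes.

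For \ref{prop:charact-hausd:1}$\Leftrightarrow$\ref{prop:charact-hausd:3}, write $Z\coloneqq\Ker{\diff}^{n+1}$ and $B\coloneqq\Imm{\diff}^n$. Since ${\diff}^{n+1}$ is bounded, $Z$ is a closed subspace of the Banach space $C^n_{b,\homog}(\Gamma;E)$. The cohomology $H^n_b(\Gamma;E)=Z/B$ carries the quotient topology of the topological vector space $Z$ by the subspace $B$. A quotient of a topological vector space by a subspace is Hausdorff if and only if the subspace is closed. Indeed, the closure of $\{0\}$ in $Z/B$ is the image of $\overline{B}$ (closure taken in $Z$, which coincides with closure in the ambient space since $Z$ is closed and $B\subseteq Z$). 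A topological group is Hausdorff precisely when $\{0\}$ is closed. Hence $Z/B$ is Hausdorff if and only if $B=\overline{B}$.

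For \ref{prop:charact-hausd:2}$\Leftrightarrow$\ref{prop:charact-hausd:3}, the natural map $Z/B\to Z/\overline{B}$ is always surjective, and its kernel is $\overline{B}/B$. So it is an isomorphism (algebraically, and then automatically topologically, since both spaces carry quotient topologies from $Z$) if and only if $\overline{B}=B$.

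For \ref{prop:charact-hausd:3}$\Leftrightarrow$\ref{prop:charact-hausd:4}, I will use the standard cochain isomorphism between $C^\bullet_{b,\homog}(\Gamma;E)$ and $C^\bullet_{b,\inhom}(\Gamma;E)$. It is given by
\[
f\mapsto\bigl((\gamma_1,\dots,\gamma_n)\mapsto f(1,\gamma_1,\gamma_1\gamma_2,\dots,\gamma_1\cdots\gamma_n)\bigr),
\]
with inverse $g\mapsto\bigl((\gamma_0,\dots,\gamma_n)\mapsto\gamma_0\cdot g(\gamma_0^{-1}\gamma_1,\dots,\gamma_{n-1}^{-1}\gamma_n)\bigr)$. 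Both maps are isometric for the $\ell^\infty$ norms, because $\Gamma$ acts by isometries and $\Gamma$-invariance lets one read off all values from those with $\gamma_0=1$. Hence they are isomorphisms of Banach spaces commuting with the codifferentials. The image of ${\diff}^n$ in one complex is therefore mapped onto the image of ${\diff}^n$ in the other by a homeomorphism, so one image is closed if and only if the other is.

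The only step needing any care is checking that this explicit isomorphism is isometric and intertwines the codifferentials, which the paper cites from \cite{frigerio}*{\S{}1.7}. I will simply invoke that reference, noting only that a bounded linear isomorphism (isometry) of complexes preserves closedness of images.
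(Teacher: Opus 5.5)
Your proposal is correct and takes the same approach as the paper. The paper's proof is the single observation that a quotient $V/W$ of a topological vector space is Hausdorff if and only if $W$ is closed, with the equivalence of the homogeneous and bar closed-image conditions resting implicitly on the standard isomorphism of complexes from Frigerio, which you make explicit (isometry, intertwining of codifferentials, transport of images); your extra details on the closure of $\{0\}$ in $Z/B$ and the kernel $\overline{B}/B$ of the natural map are also sound.
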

	\begin{proof}   
		The equivalence of these conditions follows from the fact that a quotient of a topological vector space $V$ by a subspace $W$ is Hausdorff if and only if $W$ is closed in $V$.
	\end{proof}

    We refer the reader to Frigerio's \cite{frigerio} and Monod's \cite{monod} monographs for detailed treatments of bounded cohomology.
	
	\subsection{Ultraproducts}
	
	Recall that a \emph{filter} on the set $\mathbb{N}$ of positive integers is a non-empty set $\mathcal{F}$ of subsets of $\mathbb{N}$ --- i.e. $\mathcal{F}\subseteq 2^\mathbb{N}$ --- such that $\emptyset\not\in\mathcal{F}$, and $\mathcal{F}$ is stable under passing to supersets and under intersections.
    A subset $A\subseteq\mathbb{N}$ is called \emph{$\mathcal{F}$-large} if $A\in\mathcal{F}$.
	An \emph{ultrafilter} $\mathcal{U}$ on $\mathbb{N}$ is a filter which is maximal under inclusion; it is \emph{non-principal} if it is not of the form $\left\{A\subseteq\mathbb{N}\::\:k\in A\right\}$ for some $k\in\mathbb{N}$.
	The existence of non-principal ultrafilters on $\mathbb{N}$ is guaranteed by Zorn's Lemma.
	
	Let $\mathcal{U}$ be a non-principal ultrafilter on $\mathbb{N}$, which we fix for the rest of this paper.
	We say that a sequence $(x_k)_{k\in\mathbb{N}}$ in a topological space $X$ \emph{converges along $\mathcal{U}$} to an element $x$ in $X$ --- and we write
	\[   
		\lim_\mathcal{U} x_k=x
	\]
	--- if for every neighbourhood $W$ of $x$ in $X$, the set $\{k\in\mathbb{N}\::\:x_n\in W\}$ lies in $\mathcal{U}$.
	The key fact is that, if $X$ is compact and Hausdorff, then for every sequence $(x_k)_{k\in\mathbb{N}}$ in $X$, there exists a unique $x\in X$ such that $\lim_\mathcal{U} x_k=x$.
	
	\medskip
	
	Let $(E_k)_{k\in\mathbb{N}}$ be a sequence of Banach spaces.
	Consider the space
	\[   
		\prod_{k\in\mathbb{N}}^{\ell^\infty}E_k\coloneqq\left\{(x_k)\in\prod_{k\in\mathbb{N}}E_k\::\:\sup_{k\in\mathbb{N}}\|x_k\|<\infty\right\}.
	\]
	There is a seminorm $\|\cdot\|_\mathcal{U}$ on $\prod_{k\in\mathbb{N}}^{\ell^\infty}E_k$ given by
	\[   
		\|(x_k)\|_\mathcal{U}\coloneqq\lim_\mathcal{U}\|x_k\|.
	\]
	The subspace $N_\mathcal{U}\coloneqq\left\{(x_k)\in\prod_{k\in\mathbb{N}}^{\ell^\infty}E_k\::\:\|(x_k)\|_\mathcal{U}=0\right\}$ is closed, and we define
	\[   
		\prod_\mathcal{U} E_k\coloneqq\left(\prod_{k\in\mathbb{N}}^{\ell^\infty}E_k\right)/N_\mathcal{U}.
	\]
	The vector space $\prod_\mathcal{U} E_k$, together with the norm $\|\cdot\|_\mathcal{U}$ induced by the seminorm on $\prod_{k\in\mathbb{N}}^{\ell^\infty}E_k$, is called the \emph{(analytic) ultraproduct} of the sequence $(E_k)$.
	Given $(x_k)\in\prod_{k\in\mathbb{N}}^{\ell^\infty}E_k$, its class in $\prod_\mathcal{U} E_k$ is denoted by $[x_k]$, or by $\left[(x_k)_{k\in\mathbb{N}}\right]$.
	If all the $E_k$'s are equal to the same Banach space $E$, then $\prod_\mathcal{U} E$ is also denoted by $E^\mathcal{U}$ and called the \emph{(analytic) ultrapower} of $E$.
	We will repeatedly and implicitly used the fact that the ultraproduct $\prod_\mathcal{U} E_k$ of a sequence $(E_k)_{k\in\mathbb{N}}$ of Banach spaces is itself a Banach space.
	
	\medskip
	
	Let $\left(E_k\right)_{k\in\mathbb{N}}$ and $\left(F_k\right)_{k\in\mathbb{N}}$ be two sequences of Banach spaces.
	A sequence of bounded linear maps $\left(f_k:E_k\rightarrow F_k\right)_{k\in\mathbb{N}}$ is called \emph{uniformly bounded}
	\[   
		\sup_{k\in\mathbb{N}}\|f_k\|<\infty.
	\]
	If this is the case, then $(f_k)_{k\in\mathbb{N}}$ induces a bounded linear map $f_\mathcal{U}:\prod_\mathcal{U} E_k\rightarrow\prod_\mathcal{U} F_k$ defined by $[x_k]\mapsto\left[f_k(x_k)\right]$.

\subsection{Asymptotic cohomology}
    \label{subsec:asymp-coh}
	
	Asymptotic cohomology was introduced by Glebsky, Lubotzky, Monod, and Rangarajan \cite{glmr} as a tool to study stability questions; we recall the main definitions and point the reader to their paper \cite{glmr}*{Section 4} for a more detailed treatment.
	We focus on the case of discrete groups and work in a setup which is slightly different from that of \cite{glmr} --- see Remark \ref{remk:compare-setups-glmr} below.
	
	\medskip
	
	Given a discrete group $\Gamma$, an \emph{asymptotic Banach $\Gamma$-module} is a sequence $\mathcal{E}=\left(E_k\right)_{k\in\mathbb{N}}$ of Banach spaces, together with a sequence $\left(\pi_k:\Gamma\rightarrow\Isom(E_k)\right)_{k\in\mathbb{N}}$ of group morphisms, where $\Isom(E_k)$ is the group of linear isometries of the space $E_k$.
	Note that, for each $\gamma\in\Gamma$, the sequence $\left(\pi_k(\gamma):E_k\rightarrow E_k\right)_{k\in\mathbb{N}}$ is uniformly bounded since $\|\pi_k(\gamma)\|=1$ for each $k$.
	Therefore, there is an induced map
	\[   
		\pi_\mathcal{U}:\Gamma\rightarrow\Isom\left(\prod_\mathcal{U} E_k\right),
	\]
	which is easily verified to be a group morphism.
	This makes $\prod_\mathcal{U} E_k$ a Banach $\Gamma$-module.
	
	\begin{remk}\label{remk:compare-setups-glmr} 
		The definition of asymptotic Banach $\Gamma$-modules in \cite{glmr} might look a little different from ours at first sight, but is actually (almost) the same, and the resulting asymptotic cohomology theories will be the same.
		An \emph{internal Banach space} in their sense is an algebraic ultraproduct
		\[      
			\prod_{\mathcal{U},\mathrm{alg}}E_k\coloneqq\prod_{k\in\mathbb{N}}E_k/\left\{(x_k)\in\prod_{k\in\mathbb{N}} E_k\::\:\left\{k\in\mathbb{N}\::\:x_k=0\right\}\in\mathcal{U}\right\}
		\]
		of Banach spaces, and an \emph{internal morphism} $f_\mathcal{U}:\prod_{\mathcal{U},\mathrm{alg}}E_k\rightarrow\prod_{\mathcal{U},\mathrm{alg}}F_k$ is a map induced by a sequence $\left(f_k:E_k\rightarrow F_k\right)_{k\in\mathbb{N}}$ of bounded linear maps --- in that case, there is no uniform boundedness condition since they are working in the algebraic, rather than analytic, ultraproduct.
		The only difference with considering the category whose objects are sequences $(E_k)_{k\in\mathbb{N}}$ of Banach spaces, and whose morphisms are sequences $\left(f_k:E_k\rightarrow F_k\right)_{k\in\mathbb{N}}$ is that, in the setup of \cite{glmr}, one does not keep track of the sequence of morphisms but only remembers the induced map between the algebraic ultraproducts.
		
		In \cite{glmr}, they proceed to define an \emph{asymptotic Banach $\Gamma$-module} to be an internal isometric action $\pi:\Gamma\times\mathcal{E}\rightarrow\mathcal{E}$ which is required to induce an action on the analytic ultraproduct corresponding to the internal Banach space $\mathcal{E}$.
		As explained above, any sequence of isometric actions on Banach spaces induces an isometric action on the analytic ultraproduct because each element of $\Gamma$ has norm $1$, so uniform boundedness automatically holds.
		Hence, an asymptotic Banach module in our sense gives rise to one in their sense.
		The asymptotic cochain complex defining asymptotic cohomology (see below) then coincides with that of \cite{glmr}.
	\end{remk}
	
	Fix an asymptotic Banach $\Gamma$-module $\mathcal{E}$.
	The \emph{asymptotic homogeneous cochain complex} of $\mathcal{E}$ is
	\[   
		0\rightarrow C^0_{a,\homog}\left(\Gamma;\mathcal{E}\right)\xrightarrow{{\diff}^1_\mathcal{U}}C^1_{a,\homog}\left(\Gamma;\mathcal{E}\right)\xrightarrow{{\diff}^2_\mathcal{U}}\cdots\xrightarrow{{\diff}^n_\mathcal{U}}C^n_{a,\homog}\left(\Gamma;\mathcal{E}\right)\xrightarrow{{\diff}^{n+1}_\mathcal{U}}\cdots,
	\]
	where
	\begin{itemize}
		\item $C^n_{a,\homog}\left(\Gamma;\mathcal{E}\right)\coloneqq\left(\prod_\mathcal{U}\ell^\infty\left(\Gamma^{n+1},E_k\right)\right)^\Gamma$, where each $\ell^\infty\left(\Gamma^{n+1},E_k\right)$ is a Banach space with the $\ell^\infty$ norm $\|\cdot\|$, and the $\Gamma$-action on the ultraproduct is induced by the $\Gamma$-actions on each $\ell^\infty\left(\Gamma^{n+1},E_k\right)$ defined by $(\ref{eq:def-action-homog})$ in \S{}\ref{subsec:hb}, and
		\item The $n$-th codifferential ${\diff}^{n}_\mathcal{U}:C^{n-1}_{a,\homog}\left(\Gamma;\mathcal{E}\right)\rightarrow C^{n}_{a,\homog}(\Gamma;\mathcal{E})$ is the map induced by the sequence $\left({\diff}^n_k:\ell^\infty(\Gamma^{n},E_k)\rightarrow\ell^\infty(\Gamma^{n+1},E_k)\right)_{k\in\mathbb{N}}$ of uniformly bounded linear maps defined by $(\ref{eq:def-codiff-homog})$ in \S{}\ref{subsec:hb}.
	\end{itemize}
	
	\begin{defi}[\cite{glmr}*{Definition 4.1.9}]
		Let $\mathcal{E}$ be an asymptotic Banach $\Gamma$-module.
		\begin{enumerate}      
			\item The \emph{asymptotic cohomology} of $\Gamma$ with $\mathcal{E}$-coefficients is defined in degree $n$ by
			\[         
				H^n_a(\Gamma;\mathcal{E})\coloneqq H^n\left(C^\bullet_{a,\homog}(\Gamma;\mathcal{E})\right)=\Ker {\diff}^{n+1}_\mathcal{U}/\Imm {\diff}^n_\mathcal{U}.
			\]
			\item The \emph{reduced asymptotic cohomology} of $\Gamma$ with $\mathcal{E}$-coefficients is defined in degree $n$ by
			\[         
				\bar{H}_a^n(\Gamma;\mathcal{E})\coloneqq\bar{H}^n\left(C^\bullet_{a,\homog}(\Gamma;\mathcal{E})\right)=\Ker {\diff}^{n+1}_\mathcal{U}/\overline{\Imm {\diff}^n_\mathcal{U}}.
			\] 
		\end{enumerate}
	\end{defi}
	
	Given a Banach $\Gamma$-module $E$ (with action $\pi:\Gamma\rightarrow\Isom(E)$), we will sometimes consider the asymptotic Banach $\Gamma$-module $\mathcal{E}\coloneqq\left(E\right)_{k\in\mathbb{N}}$, with action given by the constant sequence $\left(\pi:\Gamma\rightarrow\Isom(E)\right)_{k\in\mathbb{N}}$; we will abusively denote it by $E$.
	Hence, $H^\bullet_a(\Gamma;E)$ and $\bar{H}^\bullet_a(\Gamma;E)$ should be understood as the asymptotic cohomology of $\Gamma$ with coefficients in $\mathcal{E}\coloneqq\left(E\right)_{k\in\mathbb{N}}$.
    In this situation, we say that $\mathcal{E}$ is a \emph{diagonal asymptotic Banach $\Gamma$-module} and we speak of \emph{diagonal asymptotic cohomology}.
    In the language of \cite{glmr}, this amounts to considering asymptotic cohomology with coefficients in an algebraic ultrapower on which $\Gamma$ acts diagonally.
	
	\subsection{Asymptotic cohomology via the bar cochain complex}
	\label{subsec:asymp-cohom-bar}
	
	As is the case for bounded cohomology (and group cohomology), we now explain that asymptotic cohomology can be computed using the bar cochain complex --- this is also proved in \cite{glmr}*{Remark 4.2.7}.
	
	Given an asymptotic Banach $\Gamma$-module $\mathcal{E}$, the \emph{asymptotic bar cochain complex} of $\mathcal{E}$ is
	\[   
		0\rightarrow C^0_{a,\inhom}\left(\Gamma;\mathcal{E}\right)\xrightarrow{{\diff}^1_\mathcal{U}}C^1_{a,\inhom}\left(\Gamma;\mathcal{E}\right)\xrightarrow{{\diff}^2_\mathcal{U}}\cdots\xrightarrow{{\diff}^n_\mathcal{U}}C^n_{a,\inhom}\left(\Gamma;\mathcal{E}\right)\xrightarrow{{\diff}^{n+1}_\mathcal{U}}\cdots,
	\]
	where
	\begin{itemize}
		\item $C^n_{a,\inhom}\left(\Gamma;\mathcal{E}\right)\coloneqq\prod_\mathcal{U}\ell^\infty\left(\Gamma^{n},E_k\right)$, where each $\ell^\infty\left(\Gamma^{n},E_k\right)$ is equipped with the $\ell^\infty$ norm $\|\cdot\|$, and
		\item The $n$-th codifferential ${\diff}^{n}_\mathcal{U}:C^{n-1}_{a,\inhom}\left(\Gamma;\mathcal{E}\right)\rightarrow C^{n}_{a,\inhom}(\Gamma;\mathcal{E})$ is the map induced by the sequence $\left({\diff}^n_k:\ell^\infty(\Gamma^{n-1},E_k)\rightarrow\ell^\infty(\Gamma^{n},E_k)\right)_{k\in\mathbb{N}}$ of uniformly bounded linear maps defined by $(\ref{eq:def-codiff-bar})$ in \S{}\ref{subsec:hb}.
	\end{itemize}
	
	\begin{prop}\label{prop:ha-bar}
		There is an isomorphism of cochain complexes
		\[      
			C^\bullet_{a,\homog}(\Gamma;\mathcal{E})\cong C^\bullet_{a,\inhom}(\Gamma;\mathcal{E}).
		\]
		Therefore, the (reduced) asymptotic cohomology of $\Gamma$ with $\mathcal{E}$-coefficients can be computed via
		\[      
			H^\bullet_a\left(\Gamma;\mathcal{E}\right)\cong H^\bullet\left(C^\bullet_{a,\inhom}(\Gamma;\mathcal{E})\right)
			\quad\textrm{and}\quad
			\bar{H}^\bullet_a\left(\Gamma;\mathcal{E}\right)\cong\bar{H}^\bullet\left(C^\bullet_{a,\inhom}(\Gamma;\mathcal{E})\right).
		\]
	\end{prop}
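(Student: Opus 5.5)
The plan is to transport the classical isomorphism between the homogeneous and bar complexes levelwise to each $k$, and then check that it passes to the ultraproduct and to $\Gamma$-invariants. For each $k$ and $n$, consider the maps
\[
	\Phi^n_k:\ell^\infty(\Gamma^{n+1},E_k)\to\ell^\infty(\Gamma^n,E_k),\qquad (\Phi^n_kf)(\gamma_1,\dots,\gamma_n)\coloneqq f(1,\gamma_1,\gamma_1\gamma_2,\dots,\gamma_1\cdots\gamma_n),
\]
and
\[
	\Psi^n_k:\ell^\infty(\Gamma^n,E_k)\to\ell^\infty(\Gamma^{n+1},E_k),\qquad (\Psi^n_kg)(\gamma_0,\dots,\gamma_n)\coloneqq\gamma_0\cdot g(\gamma_0^{-1}\gamma_1,\gamma_1^{-1}\gamma_2,\dots,\gamma_{n-1}^{-1}\gamma_n).
\]
Both are linear of norm at most $1$, since $\Gamma$ acts by isometries. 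They are therefore uniformly bounded in $k$ and induce bounded maps $\Phi^n_\mathcal{U}$ and $\Psi^n_\mathcal{U}$ between the ultraproducts.

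The key steps are as follows.

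First, $\Psi^n_k$ takes values in $\Gamma$-invariant cochains. Indeed, $(\gamma\cdot\Psi^n_kg)(\gamma_0,\dots)=\gamma\gamma^{-1}\gamma_0\cdot g(\dots)$, because the consecutive quotients $\gamma_{i-1}^{-1}\gamma_i$ are unchanged under left translation. Hence $\Psi^n_\mathcal{U}$ lands in $\left(\prod_\mathcal{U}\ell^\infty(\Gamma^{n+1},E_k)\right)^\Gamma$.

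Second, $\Phi^n_k\Psi^n_k=\id$ holds exactly for each $k$. Also, $\Psi^n_k\Phi^n_kf=f$ for every $\Gamma$-invariant $f$, since
\[
	\gamma_0\cdot f(1,\gamma_0^{-1}\gamma_1,\dots,\gamma_0^{-1}\gamma_n)=f(\gamma_0,\dots,\gamma_n).
\]

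Third, both families intertwine the codifferentials $(\ref{eq:def-codiff-homog})$ and $(\ref{eq:def-codiff-bar})$ levelwise. This is the standard identity $\diff^{n}\Phi^{n-1}_k=\Phi^{n}_k\diff^{n}$, and similarly for $\Psi$, checked directly on all cochains (not only invariant ones) by expanding the sums. The first term of the bar codifferential comes from deleting $\gamma_0=1$ and then using the $\gamma_1$-translate. Because of this, the induced maps on ultraproducts are cochain maps.

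The main obstacle is that $\Psi^n_k\Phi^n_k=\id$ only on \emph{invariant} cochains at each level $k$. An element of $C^n_{a,\homog}$ is merely invariant in the ultraproduct: it is represented by $(f_k)$ with $\lim_\mathcal{U}\|\gamma\cdot f_k-f_k\|=0$ for each $\gamma$, and the $f_k$ need not be invariant. I handle this pointwise, which suffices because the estimate is uniform in the argument. For fixed $(\gamma_0,\dots,\gamma_n)$,
\[
	(\Psi^n_k\Phi^n_kf_k)(\gamma_0,\dots,\gamma_n)-f_k(\gamma_0,\dots,\gamma_n)=\left((\gamma_0\cdot f_k)-f_k\right)(\gamma_0,\dots,\gamma_n).
\]
This is bounded by $\|\gamma_0\cdot f_k-f_k\|$, but $\gamma_0$ varies over $\Gamma$, so the estimate is not uniform. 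To avoid this difficulty, I do not claim that $\Psi\Phi$ is the identity via this formula. Instead I prove directly that $\Phi^n_\mathcal{U}$ restricted to invariants is injective and that $\Psi^n_\mathcal{U}$ is its right inverse. Surjectivity of $\Phi^n_\mathcal{U}$ follows from $\Phi\Psi=\id$.

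For injectivity, suppose $[f_k]$ is invariant and $\lim_\mathcal{U}\|\Phi^n_kf_k\|=0$, that is, $\lim_\mathcal{U}\sup_{\bar\gamma}\|f_k(1,\bar\gamma)\|=0$. Then
\[
	\|f_k(\gamma_0,\dots,\gamma_n)\|=\|(\gamma_0^{-1}\cdot f_k)(1,\gamma_0^{-1}\gamma_1,\dots)\|.
\]
This again needs uniformity in $\gamma_0$. If that fails, I would instead define $C^n_{a,\homog}$ as the ultraproduct of the invariant subspaces, and show that it coincides with the invariants of the ultraproduct. The tool is the map $\Psi_k\Phi_k$, which projects onto invariants and has norm at most $1$. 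One then checks that an invariant class $[f_k]$ equals $[\Psi_k\Phi_kf_k]$, which is the same uniformity issue.

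I therefore expect this identification of the invariants of the ultraproduct to be the crux, and I would consult \cite{glmr}*{Remark 4.2.7} for how the invariance is handled there. Once the isomorphism of complexes is established, the statements for $H^\bullet_a$ and $\bar H^\bullet_a$ follow formally. The second uses that $\Phi_\mathcal{U}$ is a topological isomorphism, so it preserves closures of images.
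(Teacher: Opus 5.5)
You have found the right crux, but the proposal leaves it open. Your third step also hides the same problem: $\Phi_k$ does \emph{not} intertwine the codifferentials on arbitrary cochains. For $f\in\ell^\infty(\Gamma^{n},E_k)$ one finds
\[
(\diff^{n}\Phi^{n-1}_kf-\Phi^{n}_k\diff^{n}f)(\gamma_1,\dots,\gamma_n)=(\gamma_1\cdot f-f)(\gamma_1,\gamma_1\gamma_2,\dots,\gamma_1\cdots\gamma_n).
\]
The reason is that the first bar term is $\gamma_1\cdot f(1,\gamma_2,\gamma_2\gamma_3,\dots)$, while deleting $1$ in the homogeneous codifferential gives $f(\gamma_1,\gamma_1\gamma_2,\dots)$. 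On classes that are merely $\Gamma$-fixed in the ultraproduct, this defect is controlled only by the non-uniform quantity $\sup_{\gamma}\|\gamma\cdot f_k-f_k\|$.

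Neither gap can be closed if $C^n_{a,\homog}$ is read literally as the $\Gamma$-fixed vectors of $\prod_\mathcal{U}\ell^\infty(\Gamma^{n+1},E_k)$. Take $\Gamma=\mathbb{Z}$, $E_k=\mathbb{R}$ with trivial action, and $f_k(m)=\cos(2\pi m/k)$.
Then $\|\gamma\cdot f_k-f_k\|\le2\pi|\gamma|/k$, so $[f_k]\in C^0_{a,\homog}$. Yet $\Psi^0_\mathcal{U}\Phi^0_\mathcal{U}[f_k]$ is the constant class $[1]$, at distance $2$ from $[f_k]$. Since $[1]$ and $[f_k]$ are linearly independent while $C^0_{a,\inhom}=\prod_\mathcal{U}\mathbb{R}\cong\mathbb{R}$, there is no isomorphism of complexes at all in degree $0$. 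Similarly, the bar coboundary $\diff^2_\mathcal{U}\Phi^1_\mathcal{U}\diff^1_\mathcal{U}[f_k]$ has norm $4$ (evaluate near $(k/2,k/2)$), whereas $\Phi^2_\mathcal{U}\diff^2_\mathcal{U}\diff^1_\mathcal{U}[f_k]=0$. In particular, the identification ``invariants of the ultraproduct $=$ ultraproduct of invariants'' that your fallback needs is false.

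The paper's proof uses exactly your maps ($u^n=\Phi^n_\mathcal{U}$, $v^n=\Psi^n_\mathcal{U}$). It simply asserts that $u^\bullet$ is a cochain map and that $v^nu^n[f_k]=[f_k]$ for invariant $[f_k]$. This is precisely the step you could not justify, and it holds only for a uniform notion of invariance.

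Both the statement and that argument become correct once $C^n_{a,\homog}$ is taken to be $\prod_\mathcal{U}\bigl(\ell^\infty(\Gamma^{n+1},E_k)^\Gamma\bigr)$. Equivalently, these are the classes with $\lim_\mathcal{U}\sup_{\gamma\in\Gamma}\|\gamma\cdot f_k-f_k\|=0$, i.e.\ those fixed by the ultrapower of $\Gamma$ acting coordinatewise. Your pointwise identity gives this equivalence: the bound $\|\Psi^n_k\Phi^n_kf_k-f_k\|\le\sup_{\gamma}\|\gamma\cdot f_k-f_k\|$ shows that such a class is represented by the levelwise invariant cochains $\Psi^n_k\Phi^n_kf_k$. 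On levelwise invariant cochains the classical identities hold exactly for every $k$, with $\|\Phi^n_k\|,\|\Psi^n_k\|\le1$. So $\Phi_\mathcal{U}$ and $\Psi_\mathcal{U}$ are mutually inverse bounded cochain isomorphisms, and the statements for $H^\bullet_a$ and $\bar H^\bullet_a$ follow as you say.
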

	\begin{proof}
		We define two morphisms of cochain complexes
		\[      
			u^\bullet:C^\bullet_{a,\homog}(\Gamma;\mathcal{E})\rightarrow C^\bullet_{a,\inhom}(\Gamma;\mathcal{E})
			\quad\textrm{and}\quad
			v^\bullet:C^\bullet_{a,\inhom}(\Gamma;\mathcal{E})\rightarrow C^\bullet_{a,\homog}(\Gamma;\mathcal{E})
		\]
		which will turn out to be inverse to each other.
		\begin{itemize}
			\item The $n$-th map of $u^\bullet$ is $u^n:C^n_{a,\homog}(\Gamma;\mathcal{E})\rightarrow C^n_{a,\inhom}(\Gamma;\mathcal{E})$ given by
			\begin{multline*}
				u^n\left[f_k\right]\coloneqq\left[(\gamma_1,\dots,\gamma_n)\in\Gamma^n\longmapsto f_k\left(1,\gamma_1,(\gamma_1\gamma_2),\dots,(\gamma_1\cdots\gamma_n)\right)\right]
				\\\textstyle\textrm{for $[f_k]\in\left(\prod_\mathcal{U}\ell^\infty(\Gamma^{n+1},E_k)\right)^\Gamma$}.
			\end{multline*}
			\item The $n$-th map of $v^\bullet$ is $v^n:C^n_{a,\inhom}(\Gamma;\mathcal{E})\rightarrow C^n_{a,\homog}(\Gamma;\mathcal{E})$ given by
			\begin{multline*}        
				v^n[f_k]\coloneqq\left[(\gamma_0,\dots,\gamma_{n})\in\Gamma^{n+1}\longmapsto \gamma_0\cdot f_k\left((\gamma_0^{-1}\gamma_1),\dots,(\gamma_{n-1}^{-1}\gamma_n)\right)\right]
				\\\textstyle\textrm{for $[f_k]\in\prod_\mathcal{U}\ell^\infty(\Gamma^{n},E_k)$}.
			\end{multline*}
		\end{itemize}
		An elementary computation shows that $u^\bullet$ defines a morphism of cochain complexes --- i.e. ${\diff}^{n+1}_\mathcal{U}\circ u^n=u^{n+1}\circ {\diff}^{n+1}_\mathcal{U}$ for all $n$ --- and the same is true of $v^\bullet$.
		One readily checks that $u^\bullet\circ v^\bullet=\id_{C^\bullet_{a,\inhom}(\Gamma;\mathcal{E})}$.
		Moreover, $v^\bullet\circ u^\bullet$ is the endomorphism of $C^\bullet_{a,\homog}(\Gamma;\mathcal{E})$ given by
		\begin{multline*} 
			v^nu^n[f_k]=\left[(\gamma_0,\dots,\gamma_n)\mapsto\gamma_0\cdot f_k\left(1,(\gamma_0^{-1}\gamma_1),\dots,(\gamma_0^{-1}\gamma_{n})\right)\right]
			=[f_k].
            \\\textstyle\textrm{for $[f_k]\in\left(\prod_\mathcal{U}\ell^\infty(\Gamma^{n+1};E)\right)^\Gamma$.}
		\end{multline*}
		Therefore $v^\bullet\circ u^\bullet=\id_{C^\bullet_{a,\homog}(\Gamma;\mathcal{E})}$, and $u^\bullet$, $v^\bullet$ are mutually inverse isomorphisms as wanted.
	\end{proof}

    Note that the isomorphisms of cochain complexes in the proof of Proposition \ref{prop:ha-bar} are simply the maps between ultraproducts induced by the usual isomorphisms between the homogeneous and bar cochain complexes in bounded cohomology \cite{frigerio}*{\S{}1.7}.

    Throughout the rest of this paper, we will always work with the bar cochain complexes for bounded and asymptotic cohomology.
    We will often use the fact that $C^\bullet_{a,\inhom}(\Gamma;E)$ is obtained from $C^\bullet_{b,\inhom}(\Gamma;E)$ by taking ultrapowers: $C^n_{a,\inhom}(\Gamma,{E})=C^n_{b,\inhom}(\Gamma;E)^\mathcal{U}$ for each $n$, and ${\diff}^n_\mathcal{U}:C^{n-1}_{b,\inhom}(\Gamma;E)^\mathcal{U}\rightarrow C^{n}_{b,\inhom}(\Gamma;E)^\mathcal{U}$ is the map induced by ${\diff}^n:C^{n-1}_{b,\inhom}(\Gamma;E)\rightarrow C^{n}_{b,\inhom}(\Gamma;E)$ between ultrapowers.

\section{Quantifying closedness}
    \label{sec:cl-constant}

    Given a bounded linear map $f:E\rightarrow F$ between Banach spaces, we introduce an invariant --- the closedness constant --- associated to $f$ that measures the property of $f$ having closed image.
    This invariant is defined in \S{}\ref{subsec:cl-op}; then we prove a few lemmas for later use in \S{}\ref{subsec:cl-op-ultraprod}, \S{}\ref{subsec:exactness-ultraprod}, and \S{}\ref{subsec:cl-dual}.
    In \S{}\ref{subsec:asymp-unif-cond}, we use the closedness constant to define conditions which are asymptotic variations on Matsumoto and Morita's uniform boundary condition \cite{matsumoto-morita}, and which we will relate to asymptotic cohomology.
    
    \medskip
    
    We learnt after finishing this paper that Elena Bogliolo had introduced an invariant, which she calls the \emph{asymptotic vanishing modulus} \cite{bogliolo}, and which is closely related to the what is for us the ultralimit of the closedness constant of the codifferentials in bounded cohomology, appearing in our Definition \ref{defi:asymp-hausd-vanish}.

    \subsection{Closedness constant}
    \label{subsec:cl-op}
    We first define our invariant.
    
    \begin{defi}
        \label{defi:cl}
        Given a non-zero map $f:E\rightarrow F$ between Banach spaces, the \emph{closedness constant} of $f$ is
        \[
            \mathscr{C}(f)\coloneqq\sup_{x\in E\smallsetminus\Ker f}\left(\inf_{z\in\Ker f}\frac{\|x-z\|}{\|f(x)\|}\right)\in[0,+\infty].
        \]
        By convention, the closedness constant of the zero map will be zero.
    \end{defi}

    The following proposition justifies the name; it essentially follows from the Open Mapping Theorem, see for instance \cite{abramovich-aliprantis}*{Corollary 2.15}.

    \begin{prop}
        \label{prop:cl-constant-name}
        Given a bounded linear map $f:E\rightarrow F$ between Banach spaces, the following assertions are equivalent:
        \begin{enumerate}
            \item $\mathscr{C}(f)<+\infty$.
            \item $f$ has closed image.
        \end{enumerate}
    \end{prop}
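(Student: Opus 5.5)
The plan is to pass to the quotient by the kernel and reduce both directions to the Open Mapping Theorem. Let $K\coloneqq\Ker f$, which is closed since $f$ is bounded, and let $q:E\rightarrow E/K$ be the quotient map, so that $E/K$ is a Banach space with the quotient norm $\|q(x)\|=\inf_{z\in K}\|x-z\|$. Then $f$ factors as $f=\tilde f\circ q$, where $\tilde f:E/K\rightarrow F$ is bounded, injective, and has the same image as $f$. Rewriting Definition \ref{defi:cl} in these terms, for $f\neq0$,
\[
    \mathscr{C}(f)=\sup_{0\neq\xi\in E/K}\frac{\|\xi\|}{\|\tilde f(\xi)\|}.
\]
So $\mathscr{C}(f)<+\infty$ says exactly that $\tilde f$ is bounded below, i.e. $\|\xi\|\le C\|\tilde f(\xi)\|$ for all $\xi\in E/K$. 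If $f=0$, both assertions hold trivially, so we may assume $f\neq0$.

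For the direction (1)$\Rightarrow$(2), suppose $\|\xi\|\le C\|\tilde f(\xi)\|$ for all $\xi$. Take a sequence $\tilde f(\xi_j)$ converging to some $y\in F$. It is Cauchy, so by the lower bound $(\xi_j)$ is Cauchy in $E/K$. By completeness it converges to some $\xi$, and continuity of $\tilde f$ gives $y=\tilde f(\xi)\in\Imm f$. Hence $\Imm f$ is closed.

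For the direction (2)$\Rightarrow$(1), suppose $\Imm f$ is closed. Then $\Imm f$ is a Banach space, and $\tilde f:E/K\rightarrow\Imm f$ is a continuous linear bijection between Banach spaces. By the Open Mapping Theorem (in its bounded inverse form), $\tilde f^{-1}$ is bounded, say with norm $C$. Then $\|\xi\|\le C\|\tilde f(\xi)\|$ for all $\xi$, so $\mathscr{C}(f)\le C<+\infty$.

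Neither direction presents a real obstacle; the only points that need care are the identification of $\mathscr{C}(f)$ with the norm of $\tilde f^{-1}$, which is just the definition of the quotient norm, and checking that both $E/K$ and $\Imm f$ are complete so that the Open Mapping Theorem applies.
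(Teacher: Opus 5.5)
Your proof is correct and follows the same route as the paper. The paper gives no detailed argument: it says the statement essentially follows from the Open Mapping Theorem and refers to Abramovich--Aliprantis (Corollary 2.15). Your passage to $E/\Ker f$, which identifies $\mathscr{C}(f)$ with the norm of $\tilde f^{-1}$ on $\Imm f$ and then uses completeness in one direction and the bounded inverse theorem in the other, is exactly the standard argument behind that citation.
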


    \subsection{Closedness constant and ultraproducts}
    \label{subsec:cl-op-ultraprod}

    The next lemma gives an upper bound on the closedness constant of a map between ultraproducts induced by a uniformly bounded sequence of linear maps.
    This will be useful to show Hausdorffness of asymptotic cohomology in certain situations.
    
    \begin{lemm}[Closedness constant of an ultraproduct map]
		\label{lemm:closed-img-ultrap-gen}
		Let $(f_k:E_k\rightarrow F_k)_{k\in\mathbb{N}}$ be a uniformly bounded sequence of linear maps between Banach spaces, and let $f_\mathcal{U}:\prod_\mathcal{U}E_k\rightarrow \prod_\mathcal{U}F_k$ be the induced map between ultraproducts.
        Then
        \[
            \mathscr{C}(f_\mathcal{U})\leq\lim_\mathcal{U}\mathscr{C}(f_k).
        \]
	\end{lemm}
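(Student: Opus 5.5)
We may assume $C\coloneqq\lim_\mathcal{U}\mathscr{C}(f_k)<+\infty$, since otherwise there is nothing to prove. The plan is to show directly from Definition \ref{defi:cl} that every $x\in\prod_\mathcal{U}E_k$ with $f_\mathcal{U}(x)\neq0$ can be approximated modulo $\Ker f_\mathcal{U}$ within $C\|f_\mathcal{U}(x)\|_\mathcal{U}$. The idea is to lift $x$ to a representative, correct each coordinate by an element of $\Ker f_k$ chosen with almost optimal error, and then check that the corrected sequence is uniformly bounded, so that it defines an element of $\Ker f_\mathcal{U}$.

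Fix $\varepsilon>0$ and $x=[x_k]\notin\Ker f_\mathcal{U}$, with $\sup_k\|x_k\|<\infty$. By definition of $\lim_\mathcal{U}$, the set $A\coloneqq\{k:\mathscr{C}(f_k)<C+\varepsilon\}$ is $\mathcal{U}$-large. Fix $k\in A$. If $f_k(x_k)\neq0$, Definition \ref{defi:cl} provides $z_k\in\Ker f_k$ with
\[
\|x_k-z_k\|\leq(C+\varepsilon)\|f_k(x_k)\|+\varepsilon\|x_k\| .
\]
The extra term $\varepsilon\|x_k\|$ absorbs the gap between the infimum and an actual element; when $\mathscr{C}(f_k)=0$ and $\varepsilon\|x_k\|=0$ the infimum is attained in the limit, so we may enlarge the slack slightly if needed. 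If $f_k(x_k)=0$, take $z_k\coloneqq x_k$. For $k\notin A$, take $z_k\coloneqq0$.

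\textbf{Uniform boundedness of $(z_k)$.} This is the only genuine subtlety, because a kernel element of $f_k$ is not controlled a priori. For $k\in A$ we have
\[
\|z_k\|\leq\|x_k\|+(C+\varepsilon)\sup_j\|f_j\|\,\sup_j\|x_j\|+\varepsilon\sup_j\|x_j\| ,
\]
which is finite and independent of $k$, using that $(f_k)$ is uniformly bounded. Hence $z\coloneqq[z_k]$ is a well-defined element of $\prod_\mathcal{U}E_k$. Moreover $f_\mathcal{U}(z)=[f_k(z_k)]=0$, since $f_k(z_k)=0$ on the $\mathcal{U}$-large set $A$, so $z\in\Ker f_\mathcal{U}$.

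Passing to the ultralimit of the coordinatewise estimate gives
\[
\|x-z\|_\mathcal{U}\leq(C+\varepsilon)\|f_\mathcal{U}(x)\|_\mathcal{U}+\varepsilon\|x\|_\mathcal{U} .
\]
Dividing by $\|f_\mathcal{U}(x)\|_\mathcal{U}>0$ and letting $\varepsilon\to0$ yields
\[
\inf_{z\in\Ker f_\mathcal{U}}\frac{\|x-z\|_\mathcal{U}}{\|f_\mathcal{U}(x)\|_\mathcal{U}}\leq C .
\]
Taking the supremum over $x$ gives $\mathscr{C}(f_\mathcal{U})\leq C$. Finally, if $f_\mathcal{U}=0$, the inequality holds by the convention that the closedness constant of the zero map is zero.
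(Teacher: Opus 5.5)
Your proof is correct and is essentially the paper's argument: choose near-optimal $z_k\in\Ker f_k$ in each coordinate, set $z=[z_k]\in\Ker f_\mathcal{U}$, and take the ultralimit of the coordinatewise estimate. You are more careful than the paper about uniform boundedness of $(z_k)$ (restricting to the $\mathcal{U}$-large set where $\mathscr{C}(f_k)<C+\varepsilon$ and setting $z_k=0$ elsewhere), though your extra slack $\varepsilon\|x_k\|$ is unnecessary, since $\mathscr{C}(f_k)<C+\varepsilon$ already gives $z_k$ with $\|x_k-z_k\|<(C+\varepsilon)\|f_k(x_k)\|$ whenever $f_k(x_k)\neq0$.
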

    \begin{proof}
        Let $\varepsilon>0$, and let $x=[x_k]\in\prod_\mathcal{U}E_k\smallsetminus\Ker f_\mathcal{U}$.
        For $k\in\mathbb{N}$, by definition of $\mathscr{C}$, there exists $z_k\in\Ker f_k$ such that
        \[
            \|x_k-z_k\|\leq\left(\mathscr{C}(f_k)+\varepsilon\right)\left\|f_k(x_k)\right\|.
        \]
        The sequence $(z_k)_{k\in\mathbb{N}}$ is bounded, and we consider $z\coloneqq[z_k]\in\prod_\mathcal{U}E_k$.
        We have
        \begin{align*}
            \|x-z\|_\mathcal{U}
            &=\lim_\mathcal{U}\|x_k-z_k\|
            \\&\leq\lim_\mathcal{U}\left(\left(\mathscr{C}(f_k)+\varepsilon\right)\left\|f_k(x_k)\right\|\right)
            \\&=\left(\lim_\mathcal{U}\mathscr{C}(f_k)+\varepsilon\right)\left\|f_\mathcal{U}(x)\right\|.
        \end{align*}
        Moreover, it is clear that $z\in\Ker f_\mathcal{U}$.
        Looking back at the definition of $\mathscr{C}$, we have thus shown that
        \[
            \mathscr{C}(f_\mathcal{U})\leq\lim_\mathcal{U}\mathscr{C}(f_k)+\varepsilon.
        \]
        Since this holds for all $\varepsilon>0$, it follows that $\mathscr{C}(f_\mathcal{U})\leq\lim_\mathcal{U}\mathscr{C}(f_k)$.
    \end{proof}

    \subsection{Closedness constant and duals}
    \label{subsec:cl-dual}

    It is a classical theorem of Banach \cite{banach}*{pp. 149-150} (see also \cite{abramovich-aliprantis}*{Theorem 2.18}) that a bounded linear map $f:E\rightarrow F$ between Banach spaces has closed image if and only if the dual map $f^*:F^*\rightarrow E^*$ has closed image.
    In the language introduced above, this means that $\mathscr{C}(f)<+\infty$ if and only if $\mathscr{C}(f^*)<+\infty$.
    The following strengthens one implication of this theorem.

    \begin{lemm}[Closedness constant of a dual map]
        \label{lemm:cl-dual}
        Let $f:E\rightarrow F$ be a bounded linear map between Banach spaces.
        Assume that $f$ is injective with dense image.
        Then
        \[
            \mathscr{C}(f^*)\geq\mathscr{C}(f).
        \]
    \end{lemm}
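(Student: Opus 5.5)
The plan is to split into two cases according to whether $\mathscr{C}(f)$ is finite. In each case I would reduce to known statements: the characterisation of closedness in Proposition \ref{prop:cl-constant-name}, Banach's closed range theorem quoted just before the lemma, and the elementary fact that the adjoint of an isomorphism is an isomorphism of the same norm.

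\emph{Case $\mathscr{C}(f)=+\infty$.} By Proposition \ref{prop:cl-constant-name}, $f$ does not have closed image. Banach's theorem then says $f^*$ does not have closed image either, and applying Proposition \ref{prop:cl-constant-name} once more gives $\mathscr{C}(f^*)=+\infty$. The inequality holds trivially. Injectivity and density are not needed in this case.

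\emph{Case $\mathscr{C}(f)<+\infty$.} Now $f$ has closed image, and since the image is dense, $f$ is surjective. Together with injectivity, $f$ is a bijection, hence a Banach isomorphism by the Open Mapping Theorem. Since $\Ker f=0$, the definition reads
\[
\mathscr{C}(f)=\sup_{x\neq 0}\frac{\|x\|}{\|f(x)\|}=\sup_{y\neq 0}\frac{\|f^{-1}y\|}{\|y\|}=\|f^{-1}\|.
\]
Next, $f^*$ is injective: if $\varphi\circ f=0$, then $\varphi$ vanishes on the dense set $f(E)$, so $\varphi=0$. It follows in the same way that $\mathscr{C}(f^*)=\sup_{\varphi\neq0}\|\varphi\|/\|f^*\varphi\|$. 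Moreover $f^*$ is invertible with $(f^*)^{-1}=(f^{-1})^*$. So, substituting $\psi=f^*\varphi$, we get $\mathscr{C}(f^*)=\|(f^{-1})^*\|=\|f^{-1}\|$, the last equality being the standard isometry of taking adjoints (Hahn--Banach). Hence in fact $\mathscr{C}(f^*)=\mathscr{C}(f)$ in this case.

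There is no real obstacle here. The only point needing care is the bookkeeping showing that, because the relevant kernels are trivial, both closedness constants are genuinely operator norms of inverses. This is exactly where the hypotheses enter: injectivity of $f$ makes $\Ker f=0$, and density of the image makes $\Ker f^*=0$.
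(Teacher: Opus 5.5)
Your argument is correct, and it takes a different route from the paper. The paper does not split into cases and does not invoke Banach's closed range theorem. For a fixed $x\neq 0$ it picks a norming functional $\phi$ for $x$ by Hahn--Banach and approximates $\phi$ in norm by some $f^*\psi$. It then compares $\|x\|/\|f(x)\|$ with $\|\psi\|/\|f^*\psi\|$ directly, obtaining $\mathscr{C}(f)\leq(1-\frac{2\varepsilon}{1-\varepsilon})^{-1}\mathscr{C}(f^*)$.

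That approximation step needs $\Imm f^*$ to be norm-dense in $E^*$, which the paper takes to follow from injectivity of $f$. In general, though, injectivity of $f$ only makes $\Imm f^*$ weak-$*$ dense. For instance, the inclusion $\ell^1\hookrightarrow c_0$ is injective with dense image, but its adjoint is the inclusion $\ell^1\hookrightarrow\ell^\infty$, whose image has norm closure $c_0$.

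Your case split avoids this issue. When $\mathscr{C}(f)=+\infty$, Banach's theorem does the work with no density requirement on $\Imm f^*$. When $\mathscr{C}(f)<+\infty$, the map $f^*$ is onto, so there is nothing to approximate, and you even get the equality $\mathscr{C}(f^*)=\|f^{-1}\|=\mathscr{C}(f)$.

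So the paper's estimate is more hands-on and avoids the closed range theorem, but it only works where norm-density of $\Imm f^*$ actually holds (e.g.\ for reflexive $E$). Your argument is shorter, uses the closed range theorem as a black box, and is valid in the full generality of the statement. That generality is what the application in Lemma \ref{lemm:bader-sauer-gen} requires, since the spaces there are typically non-reflexive.
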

    \begin{proof}
        Since $f$ is injective with dense image, so is $f^*$.
        In particular, the closedness constant can be expressed as
        \[
            \mathscr{C}(f)=\sup_{x\in E\smallsetminus\Ker f}\frac{\|x\|}{\left\|f(x)\right\|},
        \]
        and similarly for $f^*$.

        Let $x\in E\smallsetminus\Ker f$.
        Let $\varepsilon>0$.
        The Hahn--Banach Theorem yields an element $\phi\in E^*$ such that $|\phi(x)|=\|\phi\|\cdot\|x\|$; we can assume after renormalising that $\|\phi\|=1$.
        Since $f^*$ has dense image, we can find $\psi\in F^*$ such that $\|\phi-f^*\psi\|\leq\varepsilon=\varepsilon\|\phi\|$.
        This implies that
        \[
            \|\phi-f^*\psi\|\leq\varepsilon\|\phi\|\leq\varepsilon\|f^*\psi\|+\varepsilon\|\phi-f^*\psi\|,
        \]
        so $\|\phi-f^*\psi\|\leq\frac{\varepsilon}{1-\varepsilon}\|f^*\psi\|$.
        Hence,
        \begin{align*}
            \|\psi\|\cdot\|f(x)\|
            &\geq|\psi\circ f(x)|
            \\&\geq|\phi(x)|-\|\phi-f^*\psi\|\cdot\|x\|
            \\&=\left(\|\phi\|-\|\phi-f^*\psi\|\right)\cdot\|x\|
            \\&\geq\left(\|f^*\psi\|-2\|\phi-f^*\psi\|\right)\cdot\|x\|
            \\&\geq\left(1-\frac{2\varepsilon}{1-\varepsilon}\right)\cdot\|f^*\psi\|\cdot\|x\|.
        \end{align*}
        It follows that
        \[
            \frac{\|x\|}{\|f(x)\|}
            \leq\left(1-\frac{2\varepsilon}{1-\varepsilon}\right)^{-1}\frac{\|\psi\|}{\|f^*\psi\|}
            \leq\left(1-\frac{2\varepsilon}{1-\varepsilon}\right)^{-1}\mathscr{C}(f^*).
        \]
        Taking the supremum over $x$ yields
        \[
            \mathscr{C}(f)\leq\left(1-\frac{2\varepsilon}{1-\varepsilon}\right)^{-1}\mathscr{C}(f^*).
        \]
        Since this holds for all $\varepsilon>0$, we deduce that $\mathscr{C}(f)\leq\mathscr{C}(f^*)$.
    \end{proof}

    \begin{remk}
        One could remove from Lemma \ref{lemm:cl-dual} the assumption that $f$ is injective with dense image.
        Moreover, the reverse inequality should also hold.
        However, we will not need any of these facts, so we leave their proofs to the reader.
    \end{remk}

    \subsection{Exactness of ultraproducts}
    \label{subsec:exactness-ultraprod}

    In the next lemma, uniform control of the closedness constant allows us to obtain an exactness result for ultraproducts, which we will use at several points.

    \begin{lemm}[Exactness of ultraproducts]
        \label{lemm:ultraprod-exact}
        For each $k\in\mathbb{N}$, let
        \[
            A_k\xrightarrow{f_k}B_k\xrightarrow{g_k}C_k
        \]
        be a sequence of Banach spaces, in the sense that $\Imm f_k=\Ker g_k$.
        Assume that this sequence is exact for every $k$ in a $\mathcal{U}$-large set.
        Assume in addition that the sequences $\left(f_k\right)_{k\in\mathbb{N}}$, $\left(g_k\right)_{k\in\mathbb{N}}$ are uniformly bounded, and that
        \[
            \lim_\mathcal{U}\mathscr{C}(f_k),\lim_\mathcal{U}\mathscr{C}(g_k)<+\infty.
        \]
        Then the induced sequence of Banach spaces
        \[
            \prod_\mathcal{U}A_k\xrightarrow{f_\mathcal{U}}\prod_\mathcal{U}B_k\xrightarrow{g_\mathcal{U}}\prod_\mathcal{U}C_k
        \]
        is exact.
    \end{lemm}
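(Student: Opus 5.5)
We must show $\Imm f_\mathcal{U}=\Ker g_\mathcal{U}$. The inclusion $\Imm f_\mathcal{U}\subseteq\Ker g_\mathcal{U}$ is immediate: for $[a_k]\in\prod_\mathcal{U}A_k$ we have $g_k f_k(a_k)=0$ for $\mathcal{U}$-almost every $k$, so $g_\mathcal{U}f_\mathcal{U}[a_k]=0$. The substance is the reverse inclusion, and the plan is to lift a kernel element of the ultraproduct to honest kernel elements at each level, using the bound on $\mathscr{C}(g_k)$, and then to lift these to preimages under $f_k$ with controlled norm, using the bound on $\mathscr{C}(f_k)$.

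Fix $y=[y_k]\in\Ker g_\mathcal{U}$, so $\lim_\mathcal{U}\|g_k(y_k)\|=0$. Set $C\coloneqq\max(\lim_\mathcal{U}\mathscr{C}(f_k),\lim_\mathcal{U}\mathscr{C}(g_k))+1<\infty$. Then the set of $k$ with $\mathscr{C}(f_k),\mathscr{C}(g_k)<C$ and with $\Imm f_k=\Ker g_k$ is $\mathcal{U}$-large; for $k$ outside it we choose all auxiliary elements to be $0$. For $k$ in this set we argue as follows.

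\emph{Step 1: correct $y_k$ into $\Ker g_k$.} If $y_k\in\Ker g_k$, put $w_k\coloneqq y_k$. Otherwise the definition of $\mathscr{C}(g_k)$ gives $z_k\in\Ker g_k$ with $\|y_k-z_k\|\le C\|g_k(y_k)\|$, and we put $w_k\coloneqq z_k$. In either case $\|y_k-w_k\|\le C\|g_k(y_k)\|$, which tends to $0$ along $\mathcal{U}$. Hence $(w_k)$ is bounded and $[w_k]=y$.

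\emph{Step 2: lift $w_k$ through $f_k$ with norm control.} Since $w_k\in\Ker g_k=\Imm f_k$, we may pick $a_k$ with $f_k(a_k)=w_k$. Using the definition of $\mathscr{C}(f_k)$, we replace $a_k$ by $a_k-z$ for a suitable $z\in\Ker f_k$, obtaining $\|a_k\|\le C\|f_k(a_k)\|=C\|w_k\|$. If instead $a_k\in\Ker f_k$, then $w_k=0$ and we take $a_k=0$. Consequently $(a_k)$ is bounded, and $f_\mathcal{U}[a_k]=[w_k]=y$.

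The main point to get right is Step 2: an arbitrary preimage could be unbounded, so that $[a_k]$ would not define an element of the ultraproduct. The uniform bound on the closedness constant is exactly what lets us choose preimages of size comparable to $\|w_k\|$. A minor subtlety is that the infimum in the definition of $\mathscr{C}$ need not be attained, which is why we use the slack $C$ strictly larger than the ultralimits. With $y$ arbitrary in $\Ker g_\mathcal{U}$, Steps 1 and 2 give $\Ker g_\mathcal{U}\subseteq\Imm f_\mathcal{U}$, completing the proof of exactness.
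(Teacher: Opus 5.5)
Your proposal is correct and follows essentially the same route as the paper: you move $y_k$ to a nearby point of $\Ker g_k$ using $\mathscr{C}(g_k)$, lift that point through $f_k$ with norm controlled by $\mathscr{C}(f_k)$, and conclude from $\lim_\mathcal{U}\|g_k(y_k)\|=0$. Fixing a single constant $C$ on a $\mathcal{U}$-large set where $\mathscr{C}(f_k),\mathscr{C}(g_k)<C$ secures boundedness of $(a_k)$ slightly more explicitly than the paper's pointwise constants $\mathscr{C}(\cdot)+1$.
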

    
    \begin{proof}
        Since $g_k\circ f_k=0$ for all $k$ in a $\mathcal{U}$-large set, it is clear that $g_\mathcal{U}\circ f_\mathcal{U}=0$, i.e. $\Imm f_\mathcal{U}\subseteq\Ker g_\mathcal{U}$; it remains to show the reverse inclusion.
        Let $b\coloneqq[b_k]\in\Ker g_\mathcal{U}\subseteq\prod_\mathcal{U}B_k$, where the sequence $(b_k)_{k\in\mathbb{N}}$ is bounded.
        For each $k$ in a $\mathcal{U}$-large set, the definition of $\mathscr{C}$ (see Definition \ref{defi:cl}) yields an element $z_k\in\Ker g_k$ such that
        \[
            \|b_k-z_k\|\leq\left(\mathscr{C}(g_k)+1\right)\left\|g_k(b_k)\right\|.
        \]
        By exactness of $A_k\xrightarrow{f_k}B_k\xrightarrow{g_k}C_k$ for $k$ in a $\mathcal{U}$-large set, and by definition of $\mathscr{C}$, there is an element $a_k\in A_k$ such that
        \[
            z_k=f_k(a_k)
            \quad\textrm{and}\quad
            \|a_k\|\leq\left(\mathscr{C}(f_k)+1\right)\|z_k\|.
        \]
        For any index $k$ not in one of the $\mathcal{U}$-large sets chosen above, we simply choose $a_k\coloneqq0$.
        Since $\lim_\mathcal{U}\mathscr{C}(f_k),\lim_\mathcal{U}\mathscr{C}(g_k)<+\infty$, it follows that the sequence $(a_k)_{k\in\mathbb{N}}$ is bounded, so it defines an element $a=[a_k]\in\prod_\mathcal{U}A_k$.
        Now we have
        \[
            \left\|b-f_\mathcal{U}(a)\right\|_\mathcal{U}=\lim_{\mathcal{U}}\left\|b_k-f_k(a_k)\right\|\leq\lim_\mathcal{U}\left(\mathscr{C}(g_k)+1\right)\left\|g_k(b_k)\right\|=0
        \]
        since $\lim_\mathcal{U}\|g_k(b_k)\|=\|g_\mathcal{U}(b)\|_\mathcal{U}=0$.
        This proves that $b\in\Imm f_\mathcal{U}$ as wanted.
    \end{proof}

    \begin{remk}
        \begin{enumerate}
            \item Lemma \ref{lemm:ultraprod-exact} generalises \cite{bader-sauer:below-rk}*{Lemma 30}, which is the special case of ultraproducts.
            In the statement of \cite{bader-sauer:below-rk}*{Lemma 30}, the exact sequence automatically gives that the maps $f,g$ have closed image, or in other words that $\mathscr{C}(f),\mathscr{C}(g)<+\infty$ (see Proposition \ref{prop:cl-constant-name}).
            \item Boundedness of $\left(f_k\right)_{k\in\mathbb{N}}$ and $\left(g_k\right)_{k\in\mathbb{N}}$ in Lemma \ref{lemm:ultraprod-exact} are merely needeed to guarantee the existence of induced maps between ultraproducts.
        \end{enumerate}
    \end{remk}

    \subsection{Asymptotic vanishing and Hausdorffness}
    \label{subsec:asymp-unif-cond}

    Given a fixed degree $n\in\mathbb{N}$, a discrete group $\Gamma$ and an asymptotic Banach $\Gamma$-module $\mathcal{E}=(E_k)_{k\in\mathbb{N}}$, we introduce two conditions that capture the properties of the bounded cohomology groups $(H^n_b(\Gamma;E_k))_{k\in\mathbb{N}}$ vanishing, or being Hausdorff, in an asymptotically uniform manner.
    These conditions are related to Matsumoto and Morita's uniform boundary condition \cite{matsumoto-morita} (see also \cites{monod-nariman,fflm,loeh:anote}); they are based on the closedness constant, and we will relate them to asymptotic cohomology.

    \begin{defi}
    	\label{defi:asymp-hausd-vanish}
        Let $\Gamma$ be a discrete group and $\mathcal{E}\coloneqq(E_k)_{k\in\mathbb{N}}$ be an asymptotic Banach $\Gamma$-module.
        Fix $n\in\mathbb{N}$.
        \begin{itemize}
            \item $\mathcal{E}$ has \emph{asymptotically Hausdorff $n$-bounded cohomology} if
            \[
                \lim_\mathcal{U}\mathscr{C}\left({\diff}^n_k:C_{b,\inhom}^{n-1}(\Gamma;E_k)\rightarrow C_{b,\inhom}^{n}(\Gamma;E_k)\right)<+\infty.
            \]
            \item $\mathcal{E}$ has \emph{asymptotically vanishing $n$-bounded cohomology} if
            \begin{itemize}[label=$\circ$]
                \item The set $\{k\in\mathbb{N}\::\:H^n_b(\Gamma;E_k)=0\}$ lies in $\mathcal{U}$, and
                \item $\lim_\mathcal{U}\mathscr{C}\left({\diff}^n_k:C_{b,\inhom}^{n-1}(\Gamma;E_k)\rightarrow C_{b,\inhom}^{n}(\Gamma;E_k)\right)<+\infty$.
            \end{itemize}
        \end{itemize}
    \end{defi}

    In the case of diagonal asymptotic modules, the following proposition says that these conditions correspond to usual Hausdorffness and vanishing conditions.

    \begin{prop}
        \label{prop:charact-unif-cond-diag}
        Let $\Gamma$ be a discrete group, $E$ a Banach $\Gamma$-module, and $n\in\mathbb{N}$.
        \begin{enumerate}
            \item\label{prop:charact-unif-cond-diag:hausd} $H_b^n(\Gamma;E)$ is Hausdorff if and only if $\mathcal{E}=(E)_{k\in\mathbb{N}}$ has asymptotically Hausdorff $n$-bounded cohomology.
            \item\label{prop:charact-unif-cond-diag:vanish} $H_b^n(\Gamma;E)=0$ if and only if $\mathcal{E}=(E)_{k\in\mathbb{N}}$ has asymptotically vanishing $n$-bounded cohomology.
        \end{enumerate}
    \end{prop}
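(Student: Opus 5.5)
In the diagonal case every $E_k$ equals $E$, so all the codifferentials ${\diff}^n_k$ coincide with the single map ${\diff}^n:C^{n-1}_{b,\inhom}(\Gamma;E)\to C^n_{b,\inhom}(\Gamma;E)$. Hence the sequence $\bigl(\mathscr{C}({\diff}^n_k)\bigr)_{k\in\mathbb{N}}$ is constant, equal to $\mathscr{C}({\diff}^n)\in[0,+\infty]$. Its ultralimit along $\mathcal{U}$ is therefore $\mathscr{C}({\diff}^n)$ itself: a constant sequence converges along any ultrafilter to its value, also in $[0,+\infty]$ viewed as a compact Hausdorff space.

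For \ref{prop:charact-unif-cond-diag:hausd}, the plan is to chain three equivalences.
\begin{itemize}
\item By Definition \ref{defi:asymp-hausd-vanish}, $\mathcal{E}$ has asymptotically Hausdorff $n$-bounded cohomology if and only if $\mathscr{C}({\diff}^n)<+\infty$.
\item By Proposition \ref{prop:cl-constant-name}, $\mathscr{C}({\diff}^n)<+\infty$ if and only if ${\diff}^n$ on the bar complex has closed image. This uses that ${\diff}^n$ is a bounded linear map between Banach spaces, since its norm is at most $n+1$.
\item By Proposition \ref{prop:charact-hausd} (equivalence of \ref{prop:charact-hausd:1} and \ref{prop:charact-hausd:4}), ${\diff}^n$ has closed image if and only if $H^n_b(\Gamma;E)$ is Hausdorff.
\end{itemize}
One degenerate point to check: if ${\diff}^n=0$, then $\mathscr{C}({\diff}^n)=0$ by convention and the image is trivially closed, so the equivalence still holds.

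For \ref{prop:charact-unif-cond-diag:vanish}, the set $\{k:H^n_b(\Gamma;E_k)=0\}$ is either all of $\mathbb{N}$ or empty, according to whether $H^n_b(\Gamma;E)=0$. Since $\mathcal{U}$ contains $\mathbb{N}$ and not $\emptyset$, this set lies in $\mathcal{U}$ if and only if $H^n_b(\Gamma;E)=0$. By the first part, the second condition in the definition is equivalent to $H^n_b(\Gamma;E)$ being Hausdorff. It remains to note that vanishing implies Hausdorffness: if $H^n_b(\Gamma;E)=0$, then $\Imm{\diff}^n=\Ker{\diff}^{n+1}$, which is closed as the kernel of a bounded map. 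The conjunction of the two conditions is therefore equivalent to $H^n_b(\Gamma;E)=0$.

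There is no real obstacle here. The only points needing care are the convention for the zero map and the fact that ultralimits of constant sequences in $[0,+\infty]$ (including the value $+\infty$) behave as expected.
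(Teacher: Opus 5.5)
Your proposal is correct and follows essentially the same route as the paper. In both, the constant sequence reduces the ultralimit to $\mathscr{C}({\diff}^n)$, Propositions \ref{prop:cl-constant-name} and \ref{prop:charact-hausd} give part (1), and part (2) follows because vanishing of $H^n_b(\Gamma;E)$ already forces Hausdorffness; you merely spell out the zero-map convention and the behaviour of constant ultralimits in $[0,+\infty]$, which the paper leaves implicit.
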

    \begin{proof}
        \begin{enumerate}
            \item[\ref{prop:charact-unif-cond-diag:hausd}] The condition that $\mathscr{C}({\diff}^n)<+\infty$ is equivalent to ${\diff}^n$ having closed image (see Proposition \ref{prop:cl-constant-name}), which is equivalent to $H^n_b(\Gamma;E)$ being Hausdorffness (see Proposition \ref{prop:charact-hausd}).
            \item[\ref{prop:charact-unif-cond-diag:vanish}] The implication $(\Leftarrow)$ is clear; $(\Rightarrow)$ says that $\mathscr{C}({\diff}^n)<+\infty$ whenever $H_b^n(\Gamma;E)=0$, which is clear from item \ref{prop:charact-unif-cond-diag:hausd} above.\qedhere
        \end{enumerate}
    \end{proof}

    The next lemma gives a first relation between the above conditions and asymptotic cohomology.
    
    \begin{lemm}[Hausdorffness of $H^n_a$ from asymptotic Hausdorffness]
		\label{lemm:hausdorff-ha-hb-gen}
		Let $\Gamma$ be a discrete group, $\mathcal{E}\coloneqq(E_k)_{k\in\mathbb{N}}$ an asymptotic Banach $\Gamma$-module, and $n\in\mathbb{N}$.
		If $\mathcal{E}$ has asymptotically Hausdorff $n$-bounded cohomology, then $H_a^n(\Gamma;\mathcal{E})$ is Hausdorff.
	\end{lemm}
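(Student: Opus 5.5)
We work throughout with the bar cochain complex, which is legitimate by Proposition \ref{prop:ha-bar}: the isomorphism $C^\bullet_{a,\homog}(\Gamma;\mathcal{E})\cong C^\bullet_{a,\inhom}(\Gamma;\mathcal{E})$ is an isomorphism of cochain complexes consisting of bounded linear maps with bounded inverses. It therefore carries $\Imm{\diff}^n_\mathcal{U}$ onto the corresponding image and closed subspaces onto closed subspaces. Hence, exactly as in Proposition \ref{prop:charact-hausd}, $H^n_a(\Gamma;\mathcal{E})$ is Hausdorff if and only if the bar codifferential
\[
{\diff}^n_\mathcal{U}:\prod_\mathcal{U}\ell^\infty(\Gamma^{n-1},E_k)\rightarrow\prod_\mathcal{U}\ell^\infty(\Gamma^{n},E_k)
\]
has closed image. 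This holds because a quotient of a topological vector space by a subspace is Hausdorff if and only if the subspace is closed.

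Next we note that ${\diff}^n_\mathcal{U}$ is precisely the map induced between ultraproducts by the sequence $({\diff}^n_k)_{k\in\mathbb{N}}$ of bar codifferentials ${\diff}^n_k:C^{n-1}_{b,\inhom}(\Gamma;E_k)\rightarrow C^n_{b,\inhom}(\Gamma;E_k)$. This sequence is uniformly bounded, since $\|{\diff}^n_k\|\leq n+1$ for every $k$ (each $\pi_k(\gamma)$ is an isometry). We may therefore apply Lemma \ref{lemm:closed-img-ultrap-gen}, which gives
\[
\mathscr{C}({\diff}^n_\mathcal{U})\leq\lim_\mathcal{U}\mathscr{C}({\diff}^n_k).
\]
By the hypothesis that $\mathcal{E}$ has asymptotically Hausdorff $n$-bounded cohomology (Definition \ref{defi:asymp-hausd-vanish}), the right-hand side is finite. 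Proposition \ref{prop:cl-constant-name} then shows that ${\diff}^n_\mathcal{U}$ has closed image, and we conclude.

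The argument has no real obstacle. The only point needing care is the identification of the asymptotic bar complex with the ultraproduct of the bounded bar complexes. In particular, we must note that no $\Gamma$-invariants are taken in the bar complex. In the homogeneous complex one would instead have to compare the invariants of an ultraproduct with the ultraproduct of invariants, and using the bar model avoids this issue.
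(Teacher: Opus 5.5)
Your proposal is correct and is essentially the paper's proof. You apply Lemma~\ref{lemm:closed-img-ultrap-gen} to the uniformly bounded sequence of bar codifferentials to get $\mathscr{C}(\diff^n_\mathcal{U})\leq\lim_\mathcal{U}\mathscr{C}(\diff^n_k)<+\infty$, then deduce closed image of $\diff^n_\mathcal{U}$ and hence Hausdorffness of $H^n_a(\Gamma;\mathcal{E})$, and the extra details you supply (the bar/homogeneous identification via Proposition~\ref{prop:ha-bar} and the bound $\|\diff^n_k\|\leq n+1$) are correct and merely make explicit what the paper leaves implicit.
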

	\begin{proof}
		Lemma \ref{lemm:closed-img-ultrap-gen} yields
        \[
            \mathscr{C}({\diff^n_\mathcal{U}})
            \leq\lim_\mathcal{U}\mathscr{C}\left(\diff^n_k:C_{b,\inhom}^{n-1}(\Gamma;E_k)\rightarrow C_{b,\inhom}^{n}(\Gamma;E_k)\right)
            <+\infty,
        \]
        using the asymptotically uniform $n$-cocycles condition.
        Hence, ${\diff^n_\mathcal{U}}$ has closed image, which means that $H_a^n(\Gamma;\mathcal{E})$ is Hausdorff similarly to Proposition \ref{prop:charact-hausd}.
	\end{proof}

\section{Vanishing of (reduced) asymptotic cohomology}
    \label{sec:vanish}

    This section is devoted to the proof of Theorem \ref{thrm:charact-vanish-asymp-gen}, which asserts that vanishing of asymptotic cohomology in a fixed degree is equivalent to vanishing of reduced asymptotic cohomology in the same degree, and also automatically implies Hausdorffness in the next degree.
    This is based on a generalisation of a lemma of Bader and Sauer \cite{bader-sauer:below-rk}*{Lemma 32}, which is the content of \S{}\ref{subsec:bader-sauer}.
    Theorem \ref{thrm:charact-vanish-asymp-gen} is then proved in \S{}\ref{subsec:thrmA}.
    In \S{}\ref{subsec:nhalf}, we generalise a criterion for vanishing of diagonal asymptotic cohomology of Glebsky, Lubotzky, Monod, and Rangarajan \cite{glmr}*{Proposition 4.2.9}; this will allow us to specialise Theorem \ref{thrm:charact-vanish-asymp-gen} to the case of diagonal coefficients and obtain Theorem \ref{thrm:vanish-diag}, a full characterisation of vanishing of diagonal asymptotic cohomology in terms of bounded cohomology, in \S{}\ref{subsec:diag}.

    \subsection{Quantitative Bader--Sauer Lemma}
    \label{subsec:bader-sauer}
    In \cite{bader-sauer:unitary}*{Theorem 3.7}, Bader and Sauer proposed an ultraproduct argument to prove and generalise a theorem of Shalom \cite{shalom} characterising property $\T{}$ in terms of vanishing of \emph{reduced} cohomology for all unitary representations.
    In a subsequent paper, they summarised the core of the argument in one lemma \cite{bader-sauer:below-rk}*{Lemma 32}.
    The following is a generalisation of their lemma to ultraproducts (rather than ultrapowers); the condition on the map having non-closed image must now be replaced with a condition on the closedness constant that we introduced in Section \ref{sec:cl-constant}.
    
    \begin{lemm}[Quantitative Bader--Sauer]
        \label{lemm:bader-sauer-gen}
        Let $(f_k:E_k\rightarrow F_k)_{k\in\mathbb{N}}$ be a uniformly bounded sequence of linear maps between Banach spaces, and let $f_\mathcal{U}:\prod_\mathcal{U}E_k\rightarrow \prod_\mathcal{U}F_k$ be the induced map between ultraproducts.
		Suppose that
        \[
            \lim_\mathcal{U}\mathscr{C}(f_k)=+\infty.
        \]
        Then
		\begin{enumerate}
			\item\label{lemm:bader-sauer-gen:ker} $\prod_\mathcal{U}\Ker f_k\subsetneq\Ker f_\mathcal{U}$ --- in particular, $f_\mathcal{U}$ is not injective --- and
			\item\label{lemm:bader-sauer-gen:im} $\overline{\Imm f_\mathcal{U}}\subsetneq\prod_\mathcal{U}\overline{\Imm f_k}$ --- in particular, $f_\mathcal{U}$ has non-dense image.
		\end{enumerate}
    \end{lemm}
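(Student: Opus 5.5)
For part \ref{lemm:bader-sauer-gen:ker} I would build an explicit element of $\Ker f_\mathcal{U}$ that lies outside $\prod_\mathcal{U}\Ker f_k$. Let $M\coloneqq\sup_k\|f_k\|$. Since $\lim_\mathcal{U}\mathscr{C}(f_k)=+\infty$, the set of $k$ with $\mathscr{C}(f_k)>k$ (or simply $\geq R$ for each fixed $R$) is $\mathcal{U}$-large. For such $k$, Definition \ref{defi:cl} gives $x_k\in E_k\smallsetminus\Ker f_k$ with $\operatorname{dist}(x_k,\Ker f_k)>c_k\|f_k(x_k)\|$, where $c_k\to\infty$ along $\mathcal{U}$. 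Take for instance $c_k\coloneqq\min(\mathscr{C}(f_k),k)/2$. After rescaling we may assume $\operatorname{dist}(x_k,\Ker f_k)=1$. We then pick a near-best approximation $z_k\in\Ker f_k$ with $\|x_k-z_k\|\leq2$ and set $y_k\coloneqq x_k-z_k$. Then
\[
\|y_k\|\leq2,\qquad \operatorname{dist}(y_k,\Ker f_k)=1,\qquad \|f_k(y_k)\|=\|f_k(x_k)\|<1/c_k.
\]
For the remaining $k$ we put $y_k=0$. The sequence $(y_k)$ is bounded, so $y\coloneqq[y_k]$ is defined, and $\|f_\mathcal{U}(y)\|=\lim_\mathcal{U}\|f_k(y_k)\|=0$, so $y\in\Ker f_\mathcal{U}$.

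Next I need to check $y\notin\prod_\mathcal{U}\Ker f_k$. I read this space as the closed subspace of classes $[w_k]$ with $w_k\in\Ker f_k$ and $(w_k)$ bounded. Suppose $y=[w_k]$ with such $w_k$. Then $\lim_\mathcal{U}\|y_k-w_k\|=0$. But $\|y_k-w_k\|\geq\operatorname{dist}(y_k,\Ker f_k)=1$ on a $\mathcal{U}$-large set, which is a contradiction. The inclusion $\prod_\mathcal{U}\Ker f_k\subseteq\Ker f_\mathcal{U}$ is immediate, so the inclusion is strict.

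For part \ref{lemm:bader-sauer-gen:im} the plan is to pass to quotients so that the map becomes injective, and then use duality. Let $\bar f_k:E_k/\Ker f_k\to\overline{\Imm f_k}$ be the induced injective map with dense image. It satisfies $\|\bar f_k\|\leq M$ and $\mathscr{C}(\bar f_k)=\mathscr{C}(f_k)$. A more elementary route may work better, though, and I would try it first. Suppose for contradiction that $\overline{\Imm f_\mathcal{U}}=\prod_\mathcal{U}\overline{\Imm f_k}$. By Lemma \ref{lemm:cl-dual}, $\mathscr{C}(\bar f_k^*)\geq\mathscr{C}(f_k)\to\infty$. Hence there are functionals $\psi_k\in(\overline{\Imm f_k})^*$ with $\|\psi_k\|=1$ and $\|\psi_k\circ f_k\|\leq1/c_k\to0$. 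Each $\psi_k$ is almost annihilated by $\Imm f_k$ but still has norm one on $\overline{\Imm f_k}$. Choose $v_k\in\Imm f_k$ with $\|v_k\|\leq 2$ and $\psi_k(v_k)\geq1$, and let $v\coloneqq[v_k]\in\prod_\mathcal{U}\overline{\Imm f_k}$. Assuming density, there is $x=[x_k]$ with $\|v-f_\mathcal{U}(x)\|<1/2$, so on a large set $\|v_k-f_k(x_k)\|<1/2$. Then
\[
|\psi_k(f_k(x_k))|\geq1-1/2=1/2,
\]
while also $|\psi_k(f_k(x_k))|\leq\|x_k\|/c_k\to0$, since $(x_k)$ is bounded. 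This contradiction proves non-density.

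The main obstacle is the dual step: we need norm-one functionals on $\overline{\Imm f_k}$ that nearly vanish on $f_k(E_k)$ relative to the unit ball of $E_k$. Lemma \ref{lemm:cl-dual} applied to $\bar f_k$ supplies exactly this, but only after checking that the quotient map satisfies its hypotheses: $\bar f_k$ is injective with dense image, $(E_k/\Ker f_k)^*$ is identified with the annihilator of $\Ker f_k$, and the quotient norm does not change the relevant ratios. I would verify these identifications carefully. Throughout, the $\mathcal{U}$-large index sets should be handled as in Lemma \ref{lemm:ultraprod-exact}, by setting everything to zero off the large set.
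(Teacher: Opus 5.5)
Your proof is correct, and it reaches the same conclusions by a more hands-on route than the paper.

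The paper's proof has two stages. It first treats the case where every $f_k$ is injective with dense image. For (i) it takes a norm-one vector $[x_k]$ with $\|f_k(x_k)\|\to0$. For (ii) it takes a norm-one element of $\Ker (f^*)_\mathcal{U}$, supplied by Lemma~\ref{lemm:cl-dual}, and pushes it into $(\prod_\mathcal{U}F_k)^*$. It then reduces the general case to this one by passing to $\bar f_k\colon E_k/\Ker f_k\to\overline{\Imm f_k}$ and chasing a commutative diagram whose rows are exact by Lemma~\ref{lemm:ultraprod-exact}.

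You instead work with $f_k$ directly:
\begin{enumerate}
\item In (i), your near-best approximants $z_k$ do by hand the lifting from $E_k/\Ker f_k$ to $E_k$ that Lemma~\ref{lemm:ultraprod-exact} provides in the paper. You also get the explicit quantitative statement that $[y_k]\in\Ker f_\mathcal{U}$ lies at distance at least $1$ from $\prod_\mathcal{U}\Ker f_k$.
\item In (ii), you use the quotient only to make Lemma~\ref{lemm:cl-dual} applicable, and transport the estimate back through the isometry $(E_k/\Ker f_k)^*\cong(\Ker f_k)^\perp$, so that $\|\psi_k\circ f_k\|=\|\bar f_k^*\psi_k\|$. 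You then replace the embedding $\prod_\mathcal{U}F_k^*\to(\prod_\mathcal{U}F_k)^*$ by a direct estimate showing that $v=[v_k]$ lies at distance at least $1/2$ from $\Imm f_\mathcal{U}$. This is the paper's annihilating functional $[w_k]\mapsto\lim_\mathcal{U}\psi_k(w_k)$ written out concretely.
\end{enumerate}
Your version is more self-contained: it avoids the diagram chase and gives explicit distance bounds. The paper's version is more modular: it isolates the clean injective case and reuses the exactness lemma it needs elsewhere anyway.

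One harmless slip: $\{k:\mathscr{C}(f_k)>k\}$ need not be $\mathcal{U}$-large (e.g.\ $\mathscr{C}(f_k)=\sqrt{k}$). Your actual choice $c_k=\min(\mathscr{C}(f_k),k)/2$ only uses that $c_k<\mathscr{C}(f_k)$ on a $\mathcal{U}$-large set and that $\lim_\mathcal{U}c_k=+\infty$, and both hold.
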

    \begin{proof}
        We first assume that each map $f_k$ is injective with dense image; exactness of ultraproducts (Lemma \ref{lemm:ultraprod-exact}) will then allow us to remove these assumptions.
        For each $k\in\mathbb{N}$, the definition of $\mathscr{C}$ (see Definition \ref{defi:cl}) yields an element $x_k\in E_k$ such that
        \[
            \|x_k\|\geq\min\left\{\left(\mathscr{C}(f_k)-1\right),k\right\}\left\|f_k(x_k)\right\|.
        \]
        (The minimum in the above inequality is to cover the case where $\mathscr{C}(f_k)=+\infty$.)
        We can assume after renormalising that $\|x_k\|=1$.
        Hence, $x=[x_k]$ is a unit vector of the ultraproduct $\prod_\mathcal{U}E_k$, and we have
        \[
            \left\|f_\mathcal{U}(x)\right\|_\mathcal{U}=\lim_\mathcal{U}\left\|f_k(x_k)\right\|\leq\lim_\mathcal{U}\left(\min\left\{\left(\mathscr{C}(f_k)-1\right),k\right\}\right)^{-1}=0,
        \]
        so $x\in\Ker f_\mathcal{U}\smallsetminus\{0\}$, which proves \ref{lemm:bader-sauer-gen:ker} in the case where each $f_k$ is injective (with dense image).

        For \ref{lemm:bader-sauer-gen:im} (still assuming that each $f_k$ is injective with dense image), we consider the dual map $f_k^*:F_k^*\rightarrow E_k^*$ of each $f_k$; it is also injective with dense image, and Lemma \ref{lemm:cl-dual} guarantees that $\lim_\mathcal{U}\mathscr{C}(f_k^*)=+\infty$.
        Thus, the above argument yields a unit vector $\psi$ in the kernel of $(f^*)_\mathcal{U}:\prod_\mathcal{U}F_k^*\rightarrow\prod_\mathcal{U}E_k^*$.
        The image of $\psi$ under the natural isometric embedding $\prod_\mathcal{U}F_k^*\rightarrow(\prod_\mathcal{U}F_k)^*$ also has norm $1$ but vanishes on $\Imm f_\mathcal{U}$, hence also on $\overline{\Imm f_\mathcal{U}}$.
        It follows that $\overline{\Imm f_\mathcal{U}}\subsetneq\prod_\mathcal{U}F_k=\prod_\mathcal{U}\overline{\Imm f_k}$, completing the proof of \ref{lemm:bader-sauer-gen:im} when each $f_k$ is injective with dense image.
        
        For the general case, consider for each $k\in\mathbb{N}$ the quotient $\bar{E}_k\coloneqq E_k/\Ker f_k$, and let $\bar{f}_k:\bar{E}_k\rightarrow\overline{\Imm f_k}$ be the map induced by $f_k$.
        Note that each map $\bar{f}_k$ is injective with dense image, and $\lim_\mathcal{U}\mathscr{C}(\bar{f}_k)=\lim_\mathcal{U}\mathscr{C}(f_k)=+\infty$, so the above proof shows that the induced map $\bar{f}_\mathcal{U}:\prod_\mathcal{U}\bar{E}_k\rightarrow\prod_\mathcal{U}\overline{\Imm f_k}$ satisfies strict inclusions
        \begin{equation}
            \label{eq:bader-sauer-sp-case}
            \Ker\bar{f}_\mathcal{U}\supsetneq\{0\}
            \quad\textrm{and}\quad
            \overline{\Imm\bar{f}_\mathcal{U}}\subsetneq\prod_\mathcal{U}\overline{\Imm f_k}.
        \end{equation}
        For each $k\in\mathbb{N}$, there is a short exact sequence of Banach spaces
        \[
            0\rightarrow\Ker f_k\xrightarrow{\iota_k}E_k\xrightarrow{\pi_k}\bar{E}_k\rightarrow0,
        \]
        where $\iota_k$ is the inclusion and $\pi_k$ is the projection.
        Since $\Ker f_k$ and $\bar{E}_k$ inherit their respective norms from $E_k$, it is easy to verify that the norm and closedness constant of $\iota_k$ and $\pi_k$ are at most $1$.
        Hence, Lemma \ref{lemm:ultraprod-exact} applies and yields a short exact sequence of ultraproducts.
        Using the obvious inclusion $\prod_\mathcal{U}\Ker f_k\subseteq\Ker f_\mathcal{U}$, we get a commutative diagram of Banach spaces with exact rows and columns:
        \[\begin{tikzcd}[row sep=normal]
            && 0 \arrow[d] & 0 \arrow[d]\\
            0 \arrow[r] & \prod_\mathcal{U}\Ker f_k \arrow[r]\arrow[d,equal] & \Ker f_\mathcal{U} \arrow[r]\arrow[d] & \Ker\bar{f}_\mathcal{U} \arrow[r]\arrow[d] & 0\\
            0\arrow[r] & \prod_\mathcal{U}\Ker f_k \arrow[r,"\iota_\mathcal{U}"] & \prod_\mathcal{U}E_k \arrow[r,"\pi_\mathcal{U}"]\arrow[d,"f_\mathcal{U}"] & \prod_\mathcal{U}\bar{E}_k \arrow[r]\arrow[d,"\bar{f}_\mathcal{U}"] & 0\\
            && \prod_\mathcal{U}\overline{\Imm f_k} \arrow[r,equal] & \prod_\mathcal{U}\overline{\Imm f_k}.
        \end{tikzcd}\]
        But we know from $(\ref{eq:bader-sauer-sp-case})$ that $\Ker\bar{f}_\mathcal{U}\neq\{0\}$, which implies $\prod_\mathcal{U}\Ker f_k\subsetneq\Ker f_\mathcal{U}$ by exactness of the second row, proving \ref{lemm:bader-sauer-gen:ker} in the general case.
        As for \ref{lemm:bader-sauer-gen:im}, note that $\overline{\Imm f_\mathcal{U}}=\overline{\Imm\bar{f}_\mathcal{U}}\subsetneq\prod_\mathcal{U}\overline{\Imm f_k}$.
    \end{proof}

    \begin{remk}
        If $f:E\rightarrow F$ is a bounded linear map between Banach spaces, then $\mathscr{C}(f)=+\infty$ if and only if $f$ has non-closed image.
        Hence, applying Lemma \ref{lemm:bader-sauer-gen} to the constant sequence $(f:E\rightarrow F)_{k\in\mathbb{N}}$ recovers \cite{bader-sauer:below-rk}*{Lemma 32}.
    \end{remk}

    \subsection{Proof of Theorem \ref{thrm:charact-vanish-asymp-gen}}
    \label{subsec:thrmA}

    The first step towards the proof of Theorem \ref{thrm:charact-vanish-asymp-gen} is the following lemma, which is an application of our quantitative version of the Bader--Sauer Lemma (\ref{lemm:bader-sauer-gen}).

    \begin{lemm}
        \label{lemm:vanish-hausdorff-gen}
		Let $\Gamma$ be a discrete group, $\mathcal{E}\coloneqq(E_k)_{k\in\mathbb{N}}$ an asymptotic Banach $\Gamma$-module, and $n\in\mathbb{N}$.
		If the reduced diagonal asymptotic cohomology $\bar{H}_a^n(\Gamma;E)$ vanishes, then
        \begin{enumerate}
            \item $\mathcal{E}$ has asymptotically Hausdorff $n$-bounded cohomology, and
            \item $\mathcal{E}$ has asymptotically Hausdorff $(n+1)$-bounded cohomology.
        \end{enumerate}
	\end{lemm}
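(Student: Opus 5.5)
The plan is to argue by contradiction in both items, using the quantitative Bader--Sauer Lemma \ref{lemm:bader-sauer-gen}. (The hypothesis should be read as $\bar{H}^n_a(\Gamma;\mathcal{E})=0$.) By Proposition \ref{prop:ha-bar}, I may compute $\bar{H}^n_a(\Gamma;\mathcal{E})$ with the asymptotic bar complex. There, ${\diff}^n_\mathcal{U}$ and ${\diff}^{n+1}_\mathcal{U}$ are induced by the uniformly bounded sequences ${\diff}^n_k:C^{n-1}_{b,\inhom}(\Gamma;E_k)\rightarrow C^{n}_{b,\inhom}(\Gamma;E_k)$ and ${\diff}^{n+1}_k$; the norms are bounded by $n+1$ and $n+2$ respectively. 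So the hypothesis reads $\overline{\Imm{\diff}^n_\mathcal{U}}=\Ker{\diff}^{n+1}_\mathcal{U}$. The only general fact I need is this: for closed subspaces $V_k\subseteq W_k$, the ultraproduct $\prod_\mathcal{U}V_k$ embeds isometrically into $\prod_\mathcal{U}W_k$ via $[v_k]\mapsto[v_k]$. Since $\prod_\mathcal{U}V_k$ is a Banach space, its image is a closed subspace.

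I will also use the chain of inclusions
\[
\Imm{\diff}^n_\mathcal{U}\subseteq\prod_\mathcal{U}\overline{\Imm{\diff}^n_k}\subseteq\prod_\mathcal{U}\Ker{\diff}^{n+1}_k\subseteq\Ker{\diff}^{n+1}_\mathcal{U}.
\]
The first inclusion holds because ${\diff}^n_\mathcal{U}[x_k]=[{\diff}^n_k x_k]$. The second holds because ${\diff}^{n+1}_k\circ{\diff}^n_k=0$ and kernels are closed. The third is obvious. The two middle terms are closed by the fact above, so $\overline{\Imm{\diff}^n_\mathcal{U}}\subseteq\prod_\mathcal{U}\overline{\Imm{\diff}^n_k}$.

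For item (2), suppose $\lim_\mathcal{U}\mathscr{C}({\diff}^{n+1}_k)=+\infty$. Lemma \ref{lemm:bader-sauer-gen}\ref{lemm:bader-sauer-gen:ker} applied to $f_k={\diff}^{n+1}_k$ gives $\prod_\mathcal{U}\Ker{\diff}^{n+1}_k\subsetneq\Ker{\diff}^{n+1}_\mathcal{U}$. Combining this with the closure of the chain above yields $\overline{\Imm{\diff}^n_\mathcal{U}}\subseteq\prod_\mathcal{U}\Ker{\diff}^{n+1}_k\subsetneq\Ker{\diff}^{n+1}_\mathcal{U}$. Hence $\bar{H}^n_a(\Gamma;\mathcal{E})\neq0$, a contradiction.

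For item (1), suppose $\lim_\mathcal{U}\mathscr{C}({\diff}^n_k)=+\infty$. Lemma \ref{lemm:bader-sauer-gen}\ref{lemm:bader-sauer-gen:im} applied to $f_k={\diff}^n_k$ gives $\overline{\Imm{\diff}^n_\mathcal{U}}\subsetneq\prod_\mathcal{U}\overline{\Imm{\diff}^n_k}$. The right-hand side is contained in $\Ker{\diff}^{n+1}_\mathcal{U}$ by the chain, so again $\overline{\Imm{\diff}^n_\mathcal{U}}\neq\Ker{\diff}^{n+1}_\mathcal{U}$, a contradiction.

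The main point requiring care is the identification of the ultraproducts of subspaces with closed subspaces of $\prod_\mathcal{U}C^\bullet_{b,\inhom}(\Gamma;E_k)$, compatibly with the conventions in Lemma \ref{lemm:bader-sauer-gen}. This amounts to checking that the norm on $\prod_\mathcal{U}V_k$ is the restriction of $\|\cdot\|_\mathcal{U}$, which is immediate since each $V_k$ carries the restricted norm. Everything else is formal.
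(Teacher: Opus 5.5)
Your proposal is correct and follows essentially the same route as the paper's proof. The paper also argues by contrapositive via the chain $\overline{\Imm{\diff}^n_\mathcal{U}}\subseteq\prod_\mathcal{U}\overline{\Imm{\diff}^n_k}\subseteq\prod_\mathcal{U}\Ker{\diff}^{n+1}_k\subseteq\Ker{\diff}^{n+1}_\mathcal{U}$, with the first or last inclusion made strict by the quantitative Bader--Sauer Lemma. Beyond the paper, you explicitly justify that ultraproducts of closed subspaces are closed, which the paper uses implicitly, and you rightly read the hypothesis as $\bar{H}^n_a(\Gamma;\mathcal{E})=0$.
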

	
	\begin{proof}
		We prove the contrapositive: if $\lim_\mathcal{U}\mathscr{C}({\diff^n_k})=+\infty$ or $\lim_\mathcal{U}\mathscr{C}({\diff^{n+1}_k})=+\infty$, then $\bar{H}_a^n\left(\Gamma;{E}\right)\neq0$.
		
		Recall from \S{}\ref{subsec:asymp-cohom-bar} that the asymptotic bar cochain complex $C^\bullet_{a,\inhom}(\Gamma;\mathcal{E})$ is obtained from $(C^\bullet_{b,\inhom}(\Gamma;E_k))_{k\in\mathbb{N}}$ by taking the ultraproduct, so that we have
		\[      
			\bar{H}_a^n\left(\Gamma;{E}\right)=\Ker{\diff}^{n+1}_\mathcal{U}/\overline{\Imm{\diff}^{n}_\mathcal{U}},
		\]
        where ${\diff^{n+1}_\mathcal{U}}:\prod_\mathcal{U}C^n_{b,\inhom}(\Gamma;E_k)\rightarrow\prod_\mathcal{U}C^{n+1}_{b,\inhom}(\Gamma;E_k)$ is the map between ultraproducts induced by $({\diff}^{n+1}_k:C^n_{b,\inhom}(\Gamma;E_k)\rightarrow C^{n+1}_{b,\inhom}(\Gamma;E_k))_{k\in\mathbb{N}}$, and similarly for ${\diff^n_\mathcal{U}}$.
		
		Now write the chain of inclusions
		\begin{equation}
			\label{eq:chain-incl-gen}
			\overline{\Imm{\diff}^{n}_\mathcal{U}}
			\subseteq\prod_\mathcal{U}\overline{\Imm{\diff}^{n}_k}
			\subseteq\prod_\mathcal{U}\Ker {\diff}^{n+1}_k
			\subseteq\Ker{\diff}^{n+1}_\mathcal{U}.
		\end{equation}
		If $\lim_\mathcal{U}\mathscr{C}({\diff^n_k})=+\infty$, then the inclusion $\overline{\Imm{\diff}^{n}_\mathcal{U}}\subseteq\prod_\mathcal{U}\overline{\Imm{\diff}^{n}_k}$ is strict by the quantitative Bader--Sauer Lemma (\ref{lemm:bader-sauer-gen}); if $\lim_\mathcal{U}\mathscr{C}({\diff^{n+1}_k})=+\infty$, then the inclusion $\prod_\mathcal{U}\Ker {\diff}^{n+1}_k\subseteq\Ker{\diff}^{n+1}_\mathcal{U}$ is strict by the same lemma.
		In both cases, the chain of inclusions $(\ref{eq:chain-incl-gen})$ shows that
		\[      
			\overline{\Imm{\diff}^{n}_\mathcal{U}}\subsetneq\Ker{\diff}^{n+1}_\mathcal{U},
		\]
		which means that $\bar{H}_a^n\left(\Gamma;{E}\right)\neq0$.
	\end{proof}

    We are ready to prove Theorem \ref{thrm:charact-vanish-asymp-gen}.
    
    \begin{mthrm}{A}
		\label{thrm:charact-vanish-asymp-gen}
        Let $\Gamma$ be a discrete group and $\mathcal{E}\coloneqq(E_k)_{k\in\mathbb{N}}$ be an asymptotic Banach $\Gamma$-module.
   		For fixed $n\in\mathbb{N}$, the following assertions are equivalent:
		\begin{enumerate}
			\item\label{thrm:charact-gen:hbara} $\bar{H}^n_a(\Gamma;\mathcal{E})=0$.
			\item\label{thrm:charact-gen:ha} $H^n_a(\Gamma;\mathcal{E})=0$.
			\item\label{thrm:charact-gen:ha-zero-hsdrff} $H^n_a(\Gamma;\mathcal{E})=0$ and $H^{n+1}_a(\Gamma;\mathcal{E})$ is Hausdorff.
		\end{enumerate}
	\end{mthrm}
	
	\begin{proof}
        The implications $\ref{thrm:charact-gen:ha-zero-hsdrff}\Rightarrow\ref{thrm:charact-gen:ha}\Rightarrow\ref{thrm:charact-gen:hbara}$ are clear; it remains to prove $\ref{thrm:charact-gen:hbara}\Rightarrow\ref{thrm:charact-gen:ha-zero-hsdrff}$.
		Assume that $\bar{H}^n_a(\Gamma;\mathcal{E})=0$.
		By Lemma \ref{lemm:vanish-hausdorff-gen}, we know that $\mathcal{E}$ has asymptotically Hausdorff $n$-bounded cohomology, and asymptotically Hausdorff $(n+1)$-bounded cohomology.
		Hence, Lemma \ref{lemm:hausdorff-ha-hb-gen} implies that both $H^n_a(\Gamma;\mathcal{E})$ and $H^{n+1}_a(\Gamma;\mathcal{E})$ are Hausdorff.
        Since $\bar{H}^n_a(\Gamma;\mathcal{E})=0$, it follows that $H^n_a(\Gamma;\mathcal{E})=0$ and \ref{thrm:charact-gen:ha-zero-hsdrff} holds.
	\end{proof}

    \subsection{Asymptotic \texorpdfstring{$n\frac{1}{2}$}{n-and-a-half} criterion}
    \label{subsec:nhalf}
    
    In \cite{glmr}*{Proposition 4.2.9}, it is proved that, if a group $\Gamma$ satisfies the \emph{$2\frac{1}{2}$-condition} --- namely $H^2_b(\Gamma;\mathbb{R})=0$ and $H^3_b(\Gamma;\mathbb{R})$ is Hausdorff --- then $H^2_a(\Gamma;\mathbb{R})=0$.
   	For the purpose of \cite{glmr}, the authors are only interested in degree $2$ and in $\mathbb{R}$-coefficients (or more generally in trivial dual coefficients), but their argument works for all degrees and for general asymptotic Banach modules, after replacing the $2\frac{1}{2}$-condition with an asymptotic generalisation; this is the content of the following proposition.
    
    \begin{prop}[Asymptotic $n\frac{1}{2}$-criterion]
        \label{prop:nnp1}
        Let $\Gamma$ be a discrete group, $\mathcal{E}\coloneqq(E_k)_{k\in\mathbb{N}}$ an asymptotic Banach $\Gamma$-module, and $n\in\mathbb{N}$.
        Suppose that
        \begin{enumerate}
            \item\label{prop:nnp1:assump1} $\mathcal{E}$ has asymptotically vanishing $n$-bounded cohomology, and
            \item\label{prop:nnp1:assump2} $\mathcal{E}$ has asymptotically Hausdorff $(n+1)$-bounded cohomology.
        \end{enumerate}
        Then $H^n_a(\Gamma;\mathcal{E})=0$.
    \end{prop}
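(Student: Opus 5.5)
Work throughout in the asymptotic bar cochain complex (Proposition \ref{prop:ha-bar}): $C^\bullet_{a,\inhom}(\Gamma;\mathcal{E})$ is the ultraproduct of the complexes $C^\bullet_{b,\inhom}(\Gamma;E_k)$, with codifferentials $\diff^\bullet_\mathcal{U}$ induced by the uniformly bounded sequences $(\diff^\bullet_k)_{k\in\mathbb{N}}$. We must show $\Ker\diff^{n+1}_\mathcal{U}\subseteq\Imm\diff^n_\mathcal{U}$; the reverse inclusion is automatic. The plan is to apply exactness of ultraproducts (Lemma \ref{lemm:ultraprod-exact}) to the three-term sequences
\[
C^{n-1}_{b,\inhom}(\Gamma;E_k)\xrightarrow{\diff^n_k}C^{n}_{b,\inhom}(\Gamma;E_k)\xrightarrow{\diff^{n+1}_k}C^{n+1}_{b,\inhom}(\Gamma;E_k).
\]

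We check the hypotheses of Lemma \ref{lemm:ultraprod-exact} one at a time.
\begin{enumerate}
\item \textbf{Exactness for $\mathcal{U}$-many $k$.} The first bullet of assumption \ref{prop:nnp1:assump1} says that $\{k : H^n_b(\Gamma;E_k)=0\}\in\mathcal{U}$. For such $k$ we have exactly $\Imm\diff^n_k=\Ker\diff^{n+1}_k$.
\item \textbf{Uniform boundedness.} The codifferentials satisfy $\|\diff^m_k\|\le m+1$ independently of $k$, since each $\Gamma$ acts by isometries.
\item \textbf{Closedness constants.} The second bullet of assumption \ref{prop:nnp1:assump1} gives $\lim_\mathcal{U}\mathscr{C}(\diff^n_k)<+\infty$. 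Assumption \ref{prop:nnp1:assump2} gives $\lim_\mathcal{U}\mathscr{C}(\diff^{n+1}_k)<+\infty$.
\end{enumerate}
Lemma \ref{lemm:ultraprod-exact} then yields exactness of
\[
\prod_\mathcal{U}C^{n-1}_{b,\inhom}(\Gamma;E_k)\xrightarrow{\diff^n_\mathcal{U}}\prod_\mathcal{U}C^{n}_{b,\inhom}(\Gamma;E_k)\xrightarrow{\diff^{n+1}_\mathcal{U}}\prod_\mathcal{U}C^{n+1}_{b,\inhom}(\Gamma;E_k),
\]
that is, $H^n_a(\Gamma;\mathcal{E})=0$.

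If one unpacks the lemma, the mechanism is as follows. Take a cocycle $[b_k]$ with $\lim_\mathcal{U}\|\diff^{n+1}_k b_k\|=0$. The bound on $\mathscr{C}(\diff^{n+1}_k)$ lets us correct each $b_k$ by a small amount to a genuine cocycle $z_k$. Vanishing of $H^n_b(\Gamma;E_k)$ then gives a primitive $a_k$ with $\diff^n_k a_k=z_k$. The bound on $\mathscr{C}(\diff^n_k)$ keeps the sequence $(a_k)$ uniformly bounded, so $[a_k]$ defines an element of the ultraproduct.

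The only delicate point is precisely this uniformity. Vanishing of $H^n_b(\Gamma;E_k)$ alone would not bound the primitives uniformly in $k$, which is why the definition of asymptotic vanishing includes the closedness-constant condition. Likewise, Hausdorffness in degree $n+1$ is needed to turn asymptotic cocycles into genuine ones. Since both constraints are already packaged into Lemma \ref{lemm:ultraprod-exact}, I expect no real obstacle beyond matching the hypotheses as above.
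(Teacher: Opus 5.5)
Your proposal is correct and is essentially the paper's proof. Like the paper, you verify the hypotheses of Lemma \ref{lemm:ultraprod-exact} for the sequences $C^{n-1}_{b,\inhom}(\Gamma;E_k)\to C^{n}_{b,\inhom}(\Gamma;E_k)\to C^{n+1}_{b,\inhom}(\Gamma;E_k)$ using the two assumptions, and then read off $H^n_a(\Gamma;\mathcal{E})=0$ from exactness of the ultraproduct bar complex; your explicit bound $\|\diff^m_k\|\le m+1$ for uniform boundedness is a detail the paper leaves implicit.
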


    \begin{proof}
        Assumption \ref{prop:nnp1:assump1} says in particular that, for every $k$ in a $\mathcal{U}$-large set, the sequence
        \[
            C^{n-1}_{b,\inhom}\left(\Gamma;E_k\right)
            \xrightarrow{{\diff^n_k}}C^n_{b,\inhom}\left(\Gamma;E_k\right)
            \xrightarrow{{\diff^{n+1}_k}}C^{n+1}_{b,\inhom}\left(\Gamma;E_k\right)
        \]
        is exact.
        Moreover, \ref{prop:nnp1:assump1} and \ref{prop:nnp1:assump2} give $\lim_\mathcal{U}\mathscr{C}({\diff^n_k}),\lim_\mathcal{U}\mathscr{C}({\diff^{n+1}_k})<+\infty$.
        It follows from Lemma \ref{lemm:ultraprod-exact} that the induced sequence of ultraproducts
        \[
            \prod_\mathcal{U}C^{n-1}_{b,\inhom}\left(\Gamma;E_k\right)
            \xrightarrow{{\diff^n_\mathcal{U}}}\prod_\mathcal{U}C^n_{b,\inhom}\left(\Gamma;E_k\right)
            \xrightarrow{{\diff^{n+1}_\mathcal{U}}}\prod_\mathcal{U}C^{n+1}_{b,\inhom}\left(\Gamma;E_k\right)
        \]
        is exact.
        This means precisely that $H^n_a(\Gamma;\mathcal{E})=0$.
    \end{proof}

    \subsection{Diagonal coefficients}
    \label{subsec:diag}

    In the diagonal case $\mathcal{E}=(E)_{k\in\mathbb{N}}$ given by the data of a single Banach $\Gamma$-module $E$, we can refine Theorem \ref{thrm:charact-vanish-asymp-gen} and obtain a full characterisation of vanishing of asymptotic cohomology in terms of bounded cohomology.

    \begin{mthrm}{B}
        \label{thrm:vanish-diag}
   		Let $\Gamma$ be a discrete group, $E$ a Banach $\Gamma$-module.
   		For $n\in\mathbb{N}$, the following assertions are equivalent:
		\begin{enumerate}
			\item\label{thrm:vanish-diag:ha} $H^n_a(\Gamma;E)=0$.
			\item\label{thrm:vanish-diag:hb} $H^n_b(\Gamma;E)=0$ and $H^{n+1}_b(\Gamma;E)$ is Hausdorff.
		\end{enumerate}
    \end{mthrm}
    \begin{proof}
        \begin{description}
            \item[$\ref{thrm:vanish-diag:hb}\Rightarrow\ref{thrm:vanish-diag:ha}$]
            Condition \ref{thrm:vanish-diag:hb} is equivalent to the diagonal asymptotic Banach $\Gamma$-module $\mathcal{E}=(E)_{k\in\mathbb{N}}$ having asymptotically vanishing $n$-bounded cohomology and asymptotically Hausdorff $(n+1)$-bounded cohomology (see Proposition \ref{prop:charact-unif-cond-diag}).
            Hence, the asymptotic $n\frac{1}{2}$-criterion (Proposition \ref{prop:nnp1}) implies that $H^n_a(\Gamma;E)=0$.
            \item[$\ref{thrm:vanish-diag:ha}\Rightarrow\ref{thrm:vanish-diag:hb}$]
            If $H^n_a(\Gamma;E)=0$, then Lemma \ref{lemm:vanish-hausdorff-gen} shows that $\mathcal{E}=(E)_{k\in\mathbb{N}}$ has asymptotically Hausdorff $n$- and $(n+1)$-bounded cohomology.
            In other words, $H^n_b(\Gamma;E)$ and $H^{n+1}_b(\Gamma;E)$ are Hausdorff (see Proposition \ref{prop:charact-unif-cond-diag}).
            It remains to show that $H^n_b(\Gamma;E)=0$.
            To do so, start with an $n$-cocycle $z_0\in\Ker {\diff}^{n+1}\subseteq C_{b,\inhom}^n(\Gamma;E)$; we want to show that $z_0\in\Imm {\diff}^n$.
    		Consider the asymptotic $n$-cocycle $z\coloneqq[z_0]\in\Ker {\diff}^{n+1}_\mathcal{U}$ defined by the constant sequence $(z_0)_{k\in\mathbb{N}}$.
    		We know that $H^n_a(\Gamma;E)=0$ by assumption; this implies that $z\in\Imm{\diff}^{n}_\mathcal{U}$: there exists $c\in C_{a,\inhom}^{n-1}(\Gamma;E)=C_{b,\inhom}^{n-1}(\Gamma;E)^\mathcal{U}$ such that ${\diff}^n_\mathcal{U}\left(c\right)=z$.
    		Write $c=[c_k]$, with $(c_k)_{k\in\mathbb{N}}$ a bounded sequence in $C_{b,\inhom}^{n-1}(\Gamma;E)$.
    		Hence, we have
    		\[
    			\lim_\mathcal{U}\left\|{\diff}^n\left(c_k\right)-z_0\right\|=0.
    		\]
    		In particular, $z_0\in\overline{\Imm {\diff}^n}$.
    		But we know that $H^n_b(\Gamma;E)$ is Hausdorff, so $\Imm {\diff}^n$ is closed (see Proposition \ref{prop:charact-hausd}), and $z_0\in\Imm {\diff}^n$.
    		This concludes the proof that $H^n_b(\Gamma;E)=0$.\qedhere
        \end{description}
    \end{proof}

    \begin{remk}
        \begin{enumerate}
            \item Matsumoto and Morita proved that, if $\bar{H}^n_b(\Gamma;\mathbb{R})=0$ and $H^{n+1}_b(\Gamma;\mathbb{R})$ is Hausdorff, then the $\ell^1$-homology $H_n^{\ell^1}(\Gamma;\mathbb{R})$ vanishes \cite{matsumoto-morita}*{Corollary 2.4(ii)}.
            Hence, it follows from Theorem \ref{thrm:vanish-diag} that vanishing of $H^n_a(\Gamma;\mathbb{R})$ implies vanishing of $H_n^{\ell^1}(\Gamma;\mathbb{R})$ for any $n\in\mathbb{N}$.
            \item Conditions of the form ``$H^n=0$ and $H^{n+1}$ is Hausdorff'' also occur in unitary cohomology; see for instance work of Bader and Nowak \cite{bader-nowak:group-alg} where it is shown that such a condition holding for every unitary representation is equivalent to the Laplacian in degree $n$ having an algebraic spectral gap à la Ozawa \cite{ozawa}.
        \end{enumerate}
    \end{remk}
	
\section{Low degrees}
    \label{sec:low-deg}
	
	It is well-known that $H^1_b(\Gamma;E)=0$ for $E$ a reflexive Banach $\Gamma$-module (this was originally observed by Johnson \cite{johnson}*{Theorem 3.4}, see also \cite{monod}*{Proposition 6.2.1}).
	Analogously, we prove a vanishing result for asymptotic cohomology in degree $1$; this will allow us to deduce Hausdorffness of bounded cohomology in degree $2$ using Theorem \ref{thrm:vanish-diag}.

    Similar to the argument of \cite{bdlhv}*{Proposition 2.2.9} for bounded cohomology with unitary coefficients, our proof goes via the Chebyshev centre of a bounded set in a uniformly convex Banach space.
    The key point that allows us to adapt this to asymptotic cohomology is a continuity result for the Chebyshev centre with respect to the Hausdorff distance --- this is Lemma \ref{lemm:centre}\ref{lemm:centre:cont}.

    We first recall some notions around uniform convexity and reflexivity in \S{}\ref{subsec:bgnd-unif-conv}; then we prove what we need about the Chebyshev centre in \S{}\ref{subsec:chebyshev}.
    This leads in \S{}\ref{subsec:vanish-deg1} to vanishing of asymptotic cohomology in degree $1$: this is Theorem \ref{thrm:vanishing-h1a}.
    Finally, in \S{}\ref{subsec:hausdorff-deg2}, we deduce Hausdorffness of bounded cohomology in degree $2$: this is Corollary \ref{coro:h2b-hausdorff}.

    \subsection{Background on uniform convexity}\label{subsec:bgnd-unif-conv}

    We briefly recall some notions around uniform convexity and reflexivity.

    Let $\delta:(0,2]\rightarrow(0,1]$ be a continuous, non-decreasing function.
    We say that a collection $\mathcal{E}\coloneqq(E_i)_{i\in I}$ of normed space is \emph{uniformly $\delta$-convex}, or that $\delta$ is a \emph{modulus of convexity} for $\mathcal{E}$, if for all $i\in I$, for all $\varepsilon\in(0,2]$, and for all $u_1,u_2\in E_i$ with $\|u_1\|,\|u_2\|\leq1$ we have
    \[
        \left\|u_1-u_2\right\|\geq\varepsilon\Rightarrow\left\|\frac{u_1+u_2}{2}\right\|\leq 1-\delta(\varepsilon).
    \]
    We say that $\mathcal{E}$ is \emph{uniformly convex} if it is uniformly $\delta$-convex for some continuous, non-decreasing function $\delta:(0,2]\rightarrow(0,1]$.

    For a single normed space $E$, this coincides with the usual notion of uniform convexity (see \cite{gurarij} regarding continuity of $\delta$).

    Recall that the \emph{weak topology} on a normed space $E$ is the coarsest topology such that every bounded linear map $f:E\rightarrow\mathbb{R}$ is continuous.
    In other words, a sequence $(x_k)_{k\in\mathbb{N}}$ in $E$ converges in the weak topology, or \emph{weakly converges}, to an element $x_\infty\in E$ if and only if, for every bounded linear map $f:E\rightarrow\mathbb{R}$, we have $f(x_k)\to f(x_\infty)$ as $k\to\infty$.

    We will use the Milman--Pettis Theorem \cites{milman,pettis}, which asserts that, if $E$ is a uniformly convex Banach space, then $E$ is reflexive; equivalently, convex closed bounded sets in $E$ are weakly compact \cite{bourbaki}*{Chapitre 4, \S{}2.4, Proposition 6}.

    \subsection{The Chebyshev centre}
    \label{subsec:chebyshev}

    Given a normed space $E$, and two non-empty bounded subsets $X_1,X_2$, recall that the \emph{Hausdorff distance} from $X_1$ to $X_2$ is
    \[
        \dHaus(X_1,X_2)\coloneqq\max\left\{\sup_{x_1\in X_1}\left(\inf_{x_2\in X_2}\|x_1-x_2\|\right),\sup_{x_2\in X_2}\left(\inf_{x_1\in X_1}\|x_2-x_1\|\right)\right\}.
    \]
    
    \begin{lemm}[Chebyshev centre]
        \label{lemm:centre}
        Let $\delta:(0,2]\rightarrow(0,1]$ be a continuous, non-decreasing function, and let $E$ be a uniformly $\delta$-convex Banach space.
        \begin{enumerate}
            \item\label{lemm:centre:exist} \emph{(Existence and uniqueness)} Let $X$ be a non-empty bounded subset of $E$.
            The function $\maxdist_X:E\rightarrow[0,+\infty)$ given by
            \[
                \maxdist_X:y\mapsto\sup_{x\in X}\|x-y\|
            \]
            has a unique minimum, attained at a point called the \emph{Chebyshev centre} of $X$, and denoted by $\omega(X)$.
            The quantity $\rho(X)\coloneqq\maxdist_X(\omega(X))$ is called the \emph{Chebyshev radius} of $X$.
            \item\label{lemm:centre:indist} \emph{(Indistinguishable sets)} If $X,X'$ are two non-empty bounded subsets of $E$ with $\dHaus(X,X')=0$, then $\omega(X)=\omega(X')$.
            \item\label{lemm:centre:cont} \emph{(Continuity)} There is a function $F:[0,+\infty)^2\rightarrow[0,\infty)$ depending only on $\delta$ such that, for all non-empty bounded subsets $X,X'$ of $E$ satisfying $\rho(X)=\rho(X')\eqqcolon\rho$, and for all $\Delta\geq\dHaus(X,X')$, we have
            \[
                \|\omega(X)-\omega(X')\|\leq F\left(\Delta,\rho\right).
            \]
            Moreover, if $(\Delta_k)_{k\in\mathbb{N}},(\rho_k)_{k\in\mathbb{N}}$ are two bounded sequences in $[0,+\infty)$ with $\Delta_k\to0$ as $k\to\infty$, then
            \[
                \lim_{k\to\infty}F(\Delta_k,\rho_k)=0.
            \]
        \end{enumerate}
    \end{lemm}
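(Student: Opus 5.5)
For \ref{lemm:centre:exist}, I would use that $\maxdist_X$ is convex, $1$-Lipschitz and coercive on $E$. Its sublevel sets $\{\maxdist_X\leq t\}$ are therefore non-empty for $t>\inf\maxdist_X$, convex, closed and bounded. By the Milman--Pettis Theorem they are weakly compact, and they are nested, so their intersection is non-empty. Hence the infimum $\rho$ is attained.

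For uniqueness, suppose $y_1\neq y_2$ are two minimisers. For every $x\in X$ we have $\|x-y_i\|\leq\rho$, with $\|(x-y_1)-(x-y_2)\|=\|y_1-y_2\|$. Applying the modulus of convexity to $(x-y_i)/\rho$ gives
\[
\left\|x-\tfrac{y_1+y_2}{2}\right\|\leq\rho\left(1-\delta\left(\|y_1-y_2\|/\rho\right)\right),
\]
which contradicts minimality of $\rho$. The case $\rho=0$ is trivial, since then $X$ is a single point.

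Part \ref{lemm:centre:indist} is immediate, because $\dHaus(X,X')=0$ forces $\maxdist_X=\maxdist_{X'}$ everywhere.

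For \ref{lemm:centre:cont}, the key observation is that $|\maxdist_X(y)-\maxdist_{X'}(y)|\leq\dHaus(X,X')\leq\Delta$ for every $y$. Set $\omega=\omega(X)$, $\omega'=\omega(X')$ and $d=\|\omega-\omega'\|$. Then $\maxdist_X(\omega')\leq\rho+\Delta$ and $\maxdist_X(\omega)=\rho$. For each $x\in X$, the vectors $x-\omega$ and $x-\omega'$ have norm at most $\rho+\Delta$ and differ by $d$. Uniform convexity, applied after scaling by $\rho+\Delta$, gives
\[
\maxdist_X\left(\tfrac{\omega+\omega'}{2}\right)\leq(\rho+\Delta)\left(1-\delta\left(\min\{2,\,d/(\rho+\Delta)\}\right)\right).
\]
Since the left-hand side is at least $\rho$, we obtain
\[
\delta\left(\min\{2,d/(\rho+\Delta)\}\right)\leq\frac{\Delta}{\rho+\Delta}.
\]
Note that we also have the trivial bound $d\leq 2(\rho+\Delta)$, because both centres lie within $\rho+\Delta$ of any point of $X$ (a point of $X$ is within $\Delta$ of a point of $X'$). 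I would then define
\[
F(\Delta,\rho)\coloneqq(\rho+\Delta)\cdot\sup\left\{\varepsilon\in(0,2]\::\:\delta(\varepsilon)\leq\tfrac{\Delta}{\rho+\Delta}\right\},
\]
with the supremum of the empty set taken to be $0$, and $F\coloneqq0$ when $\rho+\Delta=0$. This gives $d\leq F(\Delta,\rho)$, and $F$ depends only on $\delta$.

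The main obstacle is the limit statement, which must hold uniformly even as $\rho_k\to0$. I would split the argument according to the size of $\rho_k$. Fix $\eta>0$ and let $M$ bound the sequence $(\rho_k)$.
\begin{enumerate}
\item If $\rho_k+\Delta_k\leq\eta$, then $F(\Delta_k,\rho_k)\leq 2\eta$, using only the bound $\varepsilon\leq 2$ in the supremum.
\item Otherwise $\rho_k+\Delta_k>\eta$, so $\Delta_k/(\rho_k+\Delta_k)\leq\Delta_k/\eta\to0$. Since $\delta$ is non-decreasing and positive, for any fixed $\varepsilon_0>0$ we eventually have $\Delta_k/\eta<\delta(\varepsilon_0)$. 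Then every admissible $\varepsilon$ in the supremum is less than $\varepsilon_0$, so $F(\Delta_k,\rho_k)\leq(M+\sup_j\Delta_j)\,\varepsilon_0$.
\end{enumerate}
Choosing $\varepsilon_0$ small and then $\eta$ small shows that $\limsup_{k\to\infty}F(\Delta_k,\rho_k)=0$. Continuity of $\delta$ is not really needed here, only its positivity and monotonicity.
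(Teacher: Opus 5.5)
Your proof is correct and has the same skeleton as the paper's. Existence comes from weak compactness of the nested sublevel sets $\{\maxdist_X\le t\}$ (Milman--Pettis), and uniqueness from the midpoint estimate given by uniform convexity. For continuity, both $\omega(X)$ and $\omega(X')$ lie in $\{\maxdist_X\le\rho+\Delta\}$, and the same midpoint estimate then bounds their distance. The paper packages that estimate once, as a claim bounding the diameter of $\{\maxdist_X\le t\}$ for $t>\rho$, and reuses it. You run the midpoint argument directly twice, which is equivalent, and your treatment of part (2) via $\maxdist_X=\maxdist_{X'}$ is the same as the paper's ball-containment remark.

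The real difference is in how $F$ is defined and how the limit statement is proved, and here your version is the more careful one. The paper sets $F(\Delta,\rho)=(\rho+\Delta)\inf\{\varepsilon\in(0,2]:\delta(\varepsilon)\ge1-(\rho/(\rho+\Delta))^2\}$. It then asserts that $1-\rho_k/(\rho_k+\Delta_k)\to0$. This fails when $\rho_k$ is not bounded away from $0$: with $\rho_k=0$ and $\Delta_k=1/k$ the quantity is constantly $1$. Moreover, for $\rho=0$ and a modulus with $\delta(2)<1$, that infimum is taken over the empty set.

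Your supremum-based $F$ avoids both problems. It is automatically bounded by $2(\rho+\Delta)$, and your case split on whether $\rho_k+\Delta_k$ is small handles small radii correctly. Also, $d/(\rho+\Delta)$ itself belongs to the set over which you take the supremum, so you never need continuity of $\delta$. The paper uses continuity to evaluate $\delta$ at the infimum and to conclude that $\varepsilon(\Delta_k,\rho_k)\to0$.
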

    \begin{proof}    
        Let $X$ be a non-empty bounded subset of $E$.
        Let $\rho\coloneqq\inf_{y\in E}\maxdist_X(y)$, and for each $t>\rho$, let
        \[
            C(t)\coloneqq\left\{y\in E\::\:\maxdist_X(y)\leq t\right\}.
        \]
        The set $C(t)$ is non-empty by choice of $\rho$, convex, closed, and bounded.
        By uniform convexity, $C(t)$ is weakly compact.
        Moreover, we have $C(s)\subseteq C(t)$ whenever $\rho<s\leq t$, so the set
        \[
            C\coloneqq\bigcap_{t>\rho}C(t)
        \]
        is non-empty, convex, and weakly compact.
        The following claim will give us an upper bound on the diameter of each $C(t)$.
        \begin{claim*}
            Let $t>\rho$, and let $\varepsilon\in(0,2]$ such that $\delta(\varepsilon)>1-\rho/t$.
            Then
            \[
                \forall y_1,y_2\in C(t),\:\|y_1-y_2\|<\varepsilon t.
            \]
        \end{claim*}
        \begin{proof}[Proof of the claim]
            Let $y_1,y_2\in C(t)$, and suppose for contradiction that $\|y_1-y_2\|\geq\varepsilon t$.
            Consider the midpoint ${m}\coloneqq\frac{1}{2}(y_1+y_2)$; it lies in $C(t)$ by convexity.
            Moreover, for all $x\in X$, we have
            \[
                \|x-{m}\|=t\left\|\frac{1}{2}\left(\frac{x-y_1}{t}+\frac{x-y_2}{t}\right)\right\|.
            \]
            Setting $u_i\coloneqq\frac{1}{t}(x-y_i)$ for $i=1,2$, we have $\|u_i\|\leq 1$ and $\|u_1-u_2\|\geq\varepsilon$.
            By definition of the modulus of convexity, it follows that
            \[
                \|x-{m}\|\leq t\left(1-\delta(\varepsilon)\right).
            \]
            Since this is true for all $x\in X$, we have
            \[
                \maxdist_X({m})\leq t\left(1-\delta(\varepsilon)\right).
            \]
            But $t\left(1-\delta(\varepsilon)\right)<\rho$ by choice of $\varepsilon$; this contradicts the definition of $\rho$.
        \end{proof}

        It follows from the claim that $C$ is a singleton.
        Indeed, suppose for contradiction that $C$ contains two distinct points $y_1\neq y_2$, and choose $\varepsilon\in(0,2]$ such that $\varepsilon(\rho+1)<\|y_1-y_2\|$.
        Since $\delta(\varepsilon)>0$, and we can choose $t\in(\rho,\rho+1]$ such that $1-\rho/t<\delta(\varepsilon)$.
        Now we have $y_1,y_2\in C\subseteq C(t)$, but $\|y_1-y_2\|\geq\varepsilon t$, contradicting the claim.
        Therefore, $C$ is a singleton, which proves \ref{lemm:centre:exist}.

        For \ref{lemm:centre:indist}, the above definition says that $\omega(X)$ is the centre of the unique closed ball of minimal radius containing $X$.
        But note that sets $X,X'$ with $\dHaus(X,X')=0$ have the property that, whenever $\bar{B}$ is a closed ball in $E$, we have $X\subseteq\bar{B}$ if and only if $X'\subseteq\bar{B}$.
        It follows that $\omega(X)=\omega(X')$.

        For \ref{lemm:centre:cont}, consider a second non-empty bounded subset $X'$ of $E$ and suppose that $\rho(X)=\rho(X')\eqqcolon\rho$.
        Let $\Delta\geq\dHaus(X,X')$; we assume for now that $\Delta>0$.
        For every $x\in X$, and for every $\epsilon>0$ there exists $x'\in X'$ with $\|x-x'\|\leq\Delta+\epsilon$.
        Therefore,
        \begin{align*}
            \|x-\omega(X')\|
            \leq\|x-x'\|+\|x'-\omega(X')\|
            \leq\Delta+\epsilon+\rho.
        \end{align*}
        Since this holds for all $x\in X$ and for all $\epsilon>0$, we have
        \begin{equation*}
            \maxdist_{X}\left(\omega(X')\right)\leq\rho+\Delta.
        \end{equation*}
        Keeping the same notations as above, this tells us that
        \[
            \omega(X')\in C\left(\rho+\Delta\right),
        \]
        and also $\omega(X)\in C\left(\rho+\Delta\right)$ by definition.
        Define $\varepsilon:(0,+\infty)\times[0,+\infty)\rightarrow[0,+\infty)$ by
        \[
            \varepsilon\left(\Delta,\rho\right)\coloneqq\inf\left\{\varepsilon\in(0,2]\::\:\delta(\varepsilon)\geq1-\left(\frac{\rho}{\rho+\Delta}\right)^2\right\}\in[0,+\infty).
        \]
        Hence, we have
        \[
            \delta\left(\varepsilon\left(\Delta,\rho\right)\right)
            \geq1-\left(\frac{\rho}{\rho+\Delta}\right)^2>1-\frac{\rho}{\rho+\Delta},
        \]
        as we assumed that $\Delta>0$.
        Since both $\omega(X),\omega(X')$ lie in $C(\rho+\Delta)$, the previous claim now implies that
        \begin{equation}
            \label{eq:cont-centre}
            \left\|\omega(X)-\omega(X')\right\|\leq F\left(\Delta,\rho\right),
        \end{equation}
        where
        \[
            F(\Delta,\rho)\coloneqq\varepsilon(\Delta,\rho)\cdot(\rho+\Delta).
        \]
        Setting $\varepsilon(0,\rho)=0$ and thus $F(0,\rho)=0$ for all $\rho\in[0,+\infty)$, item \ref{lemm:centre:indist} implies that the inequality $(\ref{eq:cont-centre})$ still holds when $\Delta=\dHaus(X,X')=0$, and is therefore true in general.

        It remains to prove the final statement: let $(\Delta_k)_{k\in\mathbb{N}},(\rho_k)_{k\in\mathbb{N}}$ be bounded sequences in $[0,+\infty)$ with $\Delta_k\to0$ as $k\to\infty$.
        Hence, $1-\frac{\rho_k}{\rho_k+\Delta_k}\to0$.
        Since $\delta$ is continuous and non-decreasing, it follows that $\varepsilon(\Delta_k,\rho_k)\to0$.
        The sequences $(\Delta_k)_{k\in\mathbb{N}},(\rho_k)_{k\in\mathbb{N}}$ are bounded, so
        \[
            F(\Delta_k,\rho_k)=\varepsilon(\Delta_k,\rho_k)\cdot(\rho_k+\Delta_k)\xrightarrow[k\to\infty]{}0.\qedhere
        \]
    \end{proof}

    \subsection{Vanishing of asymptotic cohomology in degree 1}
    \label{subsec:vanish-deg1}

    An asymptotic $\Gamma$-module $\mathcal{E}\coloneqq(E_k)_{k\in\mathbb{N}}$ is said to be \emph{uniformly convex} if the collection $(E_k)_{k\in\mathbb{N}}$ of Banach spaces is uniformly convex in the sense of \S{}\ref{subsec:bgnd-unif-conv}.

    \begin{mthrm}{C}
        \label{thrm:vanishing-h1a}
        Let $\Gamma$ be a discrete group and let $\mathcal{E}\coloneqq(E_k)_{k\in\mathbb{N}}$ be a uniformly convex asymptotic Banach $\Gamma$-module.
        Then $H_a^1(\Gamma;\mathcal{E})=0$.
    \end{mthrm}
    \begin{proof}
        Let $z\in\Ker\left({\diff}^2_\mathcal{U}:C^1_{a,\inhom}(\Gamma;\mathcal{E})\rightarrow C^2_{a,\inhom}(\Gamma;\mathcal{E})\right)$ be an asymptotic $1$-cocycle.
        We want to show that $z$ is an asymptotic coboundary.
		Recall that $C^1_{a,\inhom}(\Gamma;\mathcal{E})=\prod_\mathcal{U}C^1_{b,\inhom}(\Gamma;E_k)$ and pick a bounded sequence $(z_k)_{k\in\mathbb{N}}\in\prod_{k\in\mathbb{N}}^{\ell^\infty}C^1_{b,\inhom}(\Gamma;E_k)$ with $z=[z_k]$.

        For each $k\in\mathbb{N}$, consider the set
        \[
            X_k\coloneqq\left\{z_k(\gamma)\::\:\gamma\in\Gamma\right\}\subseteq E_k.
        \]
        By boundedness of $z_k$, the set $X_k$ is bounded.
        We will consider the Chebyshev centre $\omega(X_k)$ of $X_k$, which exists by Lemma \ref{lemm:centre}\ref{lemm:centre:exist}.

        We want to bound $\|\gamma\cdot\omega(X_k)-\omega(X_k)+z_k(\gamma)\|$, for $\gamma\in\Gamma$.
        To do so, note that, for $\gamma,\gamma'\in\Gamma$, we have
        \[
            \gamma z_k(\gamma')=z_k(\gamma\gamma')-z_k(\gamma)+{\diff^2}(z_k)(\gamma,\gamma').
        \]
        It follows easily that
        \[
            \dHaus\left(\gamma X_k,X_k-z_k(\gamma)\right)\leq\left\|{\diff^2}(z_k)\right\|,
        \]
        where $X_k-z_k(\gamma)\coloneqq\{x-z_k(\gamma)\::\:x\in X_k\}$.
        Noting that $\omega(\gamma X_k)=\gamma\cdot\omega(X_k)$ and $\omega(X_k-z_k(\gamma))=\omega(X_k)-z_k(\gamma)$ (as both $\gamma$ and translation by $z_k(\gamma)$ are affine isometries), and similarly for the Chebyshev radius $\rho$, Lemma \ref{lemm:centre}\ref{lemm:centre:cont} gives a function $F:[0,+\infty)^2\rightarrow[0,+\infty)$ such that
        \begin{equation}
            \label{eq:asymp-cobound}
            \|\gamma\cdot\omega(X_k)-\omega(X_k)+z_k(\gamma)\|\leq F\left(\left\|{\diff^2}(z_k)\right\|,\rho(X_k)\right).
        \end{equation}
        Crucially, $F$ depends only on the modulus of convexity, and is thus uniform in $k$.

        For each $k$, we have a $0$-cochain $\omega(X_k)\in C^0_{b,\inhom}(\Gamma;E_k)$.
        The sequence $(z_k)_{k\in\mathbb{N}}$ was assumed to be bounded, so the $X_k$'s are all contained in a ball of common radius in their respective spaces, and the sequence $(\omega(X_k))_{k\in\mathbb{N}}$ is also bounded; it defines an asymptotic $0$-cochain
        \[
            c\coloneqq\left[\omega(X_k)\right]\in C^0_{a,\inhom}(\Gamma;\mathcal{E})=\textstyle\prod_\mathcal{U}C^0_{b,\inhom}(\Gamma;E_k).
        \]
        Finally, we compute
        \begin{align*}
            \left\|{\diff^1_\mathcal{U}}(c)+z\right\|
            &=\lim_{\mathcal{U}}\left\|{\diff^1}\left(\omega(X_k)\right)+z_k\right\|
            \\&=\lim_\mathcal{U}\sup_{\gamma\in\Gamma}\left\|\gamma\cdot\omega(X_k)-\omega(X_k)+z_k(\gamma)\right\|
            \\&\leq\lim_\mathcal{U} F\left(\left\|{\diff^2}(z_k)\right\|,\rho(X_k)\right)
            &\textrm{by $(\ref{eq:asymp-cobound})$}.
        \end{align*}
        But $\lim_\mathcal{U}\|{\diff^2}(z_k)\|=\|{\diff^2_\mathcal{U}(z)}\|=0$, and $(\rho(X_k))_{k\in\mathbb{N}}$ is bounded by boundedness of $(z_k)_{k\in\mathbb{N}}$, so the last part of Lemma \ref{lemm:centre}\ref{lemm:centre:cont} implies that $\lim_\mathcal{U} F(\|{\diff^2}(z_k)\|,\rho(X_k))=0$, and therefore
        \[
            z=-{\diff^1_\mathcal{U}}(c),
        \]
        so $z$ is an asymptotic coboundary as wanted.
    \end{proof}

    \begin{remk}
        In the case where each $E_k$ has trivial $\Gamma$-action, there is an easier proof, not using the Chebyshev centre, that $H^1_a(\Gamma;\mathcal{E})=0$.

        Let $z\in\Ker\left({\diff}^2_\mathcal{U}:C^1_{a,\inhom}(\Gamma;\mathcal{E})\rightarrow C^2_{a,\inhom}(\Gamma;\mathcal{E})\right)$ be an asymptotic $1$-cocycle.
        We will show that $z=0$; in particular, $z$ is an asymptotic coboundary.
		To do so, pick a bounded sequence $(z_k)_{k\in\mathbb{N}}\in\prod_{k\in\mathbb{N}}^{\ell^\infty}C^1_{b,\inhom}(\Gamma;E_k)$ with $z=[z_k]$.

        For all $\gamma\in\Gamma$ and $k,m\in\mathbb{N}$, a simple computation (using the fact that $\Gamma$ acts trivially on $E_k$) shows that
        \[
            z_k(\gamma)=\frac{1}{m}z_k(\gamma^m)+\frac{1}{m}\sum_{i=1}^{m-1}{\diff^2}(z_k)\left(\gamma^i,\gamma\right).
        \]
        (The trick of rewriting $z_k$ as above is Ivanov's \cite{ivanov}.)
        Taking $m=k$ and passing to the $\ell^\infty$-norm, we deduce that
        \[
            \|z_k\|\leq\frac{1}{k}\|z_k\|+\|{\diff}^2(z_k)\|.
        \]
        But the assumption that ${\diff}^2_\mathcal{U}(z)=0$ means that $\lim_\mathcal{U}\|{\diff}^2(z_k)\|=0$; moreover, the sequence $(\|z_k\|)_{k\in\mathbb{N}}$ is bounded.
        Therefore,
        \[
            \|z\|_\mathcal{U}=\lim_\mathcal{U}\|z_k\|=0.\qedhere
        \]
    \end{remk}

    \subsection{Hausdorffness of bounded cohomology in degree 2}
    \label{subsec:hausdorff-deg2}

    Theorem \ref{thrm:vanishing-h1a}, together with our characterisation of vanishing of asymptotic cohomology, now immediately implies the following.
    
    \begin{mcoro}{D}
        \label{coro:h2b-hausdorff}
        Let $\Gamma$ be a discrete group and let $E$ be a uniformly convex Banach $\Gamma$-module.
        Then $H_b^2(\Gamma;E)$ is Hausdorff.
    \end{mcoro}
    \begin{proof}
        We have $H^1_a(\Gamma;E)=H^1_a(\Gamma;(E)_{k\in\mathbb{N}})=0$ by Theorem \ref{thrm:vanishing-h1a}, so $H_b^2(\Gamma;E)$ is Hausdorff by Theorem \ref{thrm:vanish-diag}.
    \end{proof}

    \begin{remk}
        Hausdorffness of $H^2_b(\Gamma;E)$ was previously known in the following cases:
        \begin{itemize}
            \item For $\Gamma$ discrete and $E=\mathbb{R}$ with trivial $\Gamma$-action \cites{matsumoto-morita,ivanov},
            \item For $\Gamma$ finitely generated (or more generally, compactly generated locally compact second countable) and $E$ separable \cite{monod}*{Corollary 11.4.2}.
        \end{itemize}
    \end{remk}

    \begin{remk}
        Hausdorffness of bounded cohomology is known not to hold in higher degrees, even with trivial real coefficients.
        \begin{itemize}
            \item In degree $3$, it was proved by Franceschini et al. \cite{ffps} that $H^3_b(\Gamma;\mathbb{R})$ is never Hausdorff when $\Gamma$ is acylindrically hyperbolic (extending results of Soma for free groups \cite{soma:free} and surface groups \cite{soma:srf}).
            \item In any degree $n\geq5$, it was proved by Soma \cite{soma:free} that there exist finitely generated, discrete groups $\Gamma$ such that $H^n_b(\Gamma;\mathbb{R})$ is not Hausdorff.
        \end{itemize}        
    \end{remk}
    
\bibliography{Cohomology-refs.bib}

@unpublished{bader-sauer:below-rk,
	title = {{Higher property T and below-rank phenomena of lattices}},
	author = {Bader, Uri and Sauer, Roman},
	note = {Preprint available on arXiv:\href{https://arxiv.org/abs/2511.20192}{2511.20192}},
}

@unpublished{bader-sauer:unitary,
	title = {{Higher Kazhdan property and unitary cohomology of arithmetic groups}},
	author = {Bader, Uri and Sauer, Roman},
	note = {Preprint available on arXiv:\href{https://arxiv.org/abs/2308.06517}{2308.06517}},
}

@unpublished{bogliolo,
	title = {{Ulam stability of groups with displacement properties}},
	author = {Bogliolo, Elena},
	note = {In preparation},
}

@unpublished{glmr,
	title = {{Asymptotic Cohomology and Uniform Stability for Lattices in Semisimple Groups}},
	author = {Glebsky, Lev and Lubotzky, Alexander and Monod, Nicolas and Rangarajan, Bharatram},
	note = {Preprint available on arXiv:\href{https://arxiv.org/abs/2301.00476}{2301.00476}},
}

@book {banach,
    AUTHOR = {Banach, Stefan},
     TITLE = {Th\'eorie des op\'erations lin\'eaires},
      NOTE = {Reprint of the 1932 original},
 PUBLISHER = {\'Editions Jacques Gabay, Sceaux},
      YEAR = {1993},
     PAGES = {iv+128},
      ISBN = {2-87647-148-5},
   MRCLASS = {01A75 (46-03 47-03)},
  MRNUMBER = {1357166},
}

@book {bdlhv,
    AUTHOR = {Bekka, Bachir and de la Harpe, Pierre and Valette, Alain},
     TITLE = {Kazhdan's property ({T})},
    SERIES = {New Mathematical Monographs},
    VOLUME = {11},
 PUBLISHER = {Cambridge University Press, Cambridge},
      YEAR = {2008},
     PAGES = {xiv+472},
      ISBN = {978-0-521-88720-5},
   MRCLASS = {22-02 (22E40 28D15 37A15 43A07 43A35)},
  MRNUMBER = {2415834},
MRREVIEWER = {Markus\ Neuhauser},
       DOI = {10.1017/CBO9780511542749},
       URL = {https://doi.org/10.1017/CBO9780511542749},
}

@book {bourbaki,
    AUTHOR = {Bourbaki, Nicolas},
     TITLE = {Espaces vectoriels topologiques. {C}hapitres 1 \`a{} 5},
   EDITION = {New},
      NOTE = {\'El\'ements de math\'ematique. [Elements of mathematics]},
 PUBLISHER = {Masson, Paris},
      YEAR = {1981},
     PAGES = {vii+368},
      ISBN = {2-225-68410-3},
   MRCLASS = {46-02 (46-01 46Axx 47D15)},
  MRNUMBER = {633754},
MRREVIEWER = {E.\ Gerlach},
}

@book {abramovich-aliprantis,
    AUTHOR = {Abramovich, Y. A. and Aliprantis, C. D.},
     TITLE = {An invitation to operator theory},
    SERIES = {Graduate Studies in Mathematics},
    VOLUME = {50},
 PUBLISHER = {American Mathematical Society, Providence, RI},
      YEAR = {2002},
     PAGES = {xiv+530},
      ISBN = {0-8218-2146-6},
   MRCLASS = {47B60 (46-01 46B42 46Bxx 47-01 47Axx)},
  MRNUMBER = {1921782},
MRREVIEWER = {Anthony\ W.\ Wickstead},
       DOI = {10.1090/gsm/050},
       URL = {https://doi.org/10.1090/gsm/050},
}

@article {bader-nowak:group-alg,
    AUTHOR = {Bader, Uri and Nowak, Piotr W.},
     TITLE = {Group algebra criteria for vanishing of cohomology},
   JOURNAL = {J. Funct. Anal.},
  FJOURNAL = {Journal of Functional Analysis},
    VOLUME = {279},
      YEAR = {2020},
    NUMBER = {11},
     PAGES = {108730, 18},
      ISSN = {0022-1236,1096-0783},
   MRCLASS = {20C07 (22D55 46L05 46M20)},
  MRNUMBER = {4141491},
MRREVIEWER = {Alexander\ Isaakovich\ Shtern},
       DOI = {10.1016/j.jfa.2020.108730},
       URL = {https://doi.org/10.1016/j.jfa.2020.108730},
}

@proceedings {campagnolo:hb,
     TITLE = {Bounded cohomology and simplicial volume},
    SERIES = {London Mathematical Society Lecture Note Series},
    VOLUME = {479},
 BOOKTITLE = {Proceedings of the {I}nternational {Y}oung {S}eminar on
              {B}ounded {C}ohomology and {S}implical {V}olume held online
              {N}ovember 2020--{F}ebruary 2021},
    EDITOR = {Campagnolo, Caterina and Fournier-Facio, Francesco and Heuer,
              Nicolaus and Moraschini, Marco},
 PUBLISHER = {Cambridge University Press, Cambridge},
      YEAR = {2023},
     PAGES = {xxiii+146},
      ISBN = {978-1-009-18329-1},
   MRCLASS = {53C23 (53-06 57-06)},
  MRNUMBER = {4496339},
       DOI = {10.1016/j.euromechflu.2022.10.003},
       URL = {https://doi.org/10.1016/j.euromechflu.2022.10.003},
}

@article {delorme,
    AUTHOR = {Delorme, Patrick},
     TITLE = {{$1$}-cohomologie des repr\'esentations unitaires des groupes
              de {L}ie semi-simples et r\'esolubles. {P}roduits tensoriels
              continus de repr\'esentations},
   JOURNAL = {Bull. Soc. Math. France},
  FJOURNAL = {Bulletin de la Soci\'et\'e{} Math\'ematique de France},
    VOLUME = {105},
      YEAR = {1977},
    NUMBER = {3},
     PAGES = {281--336},
      ISSN = {0037-9484},
   MRCLASS = {22E30},
  MRNUMBER = {578893},
       URL = {http://www.numdam.org/item?id=BSMF_1977__105__281_0},
}

@article {fflm,
    AUTHOR = {Fournier-Facio, Francesco and L\"oh, Clara and Moraschini,
              Marco},
     TITLE = {Bounded cohomology and binate groups},
   JOURNAL = {J. Aust. Math. Soc.},
  FJOURNAL = {Journal of the Australian Mathematical Society},
    VOLUME = {115},
      YEAR = {2023},
    NUMBER = {2},
     PAGES = {204--239},
      ISSN = {1446-7887,1446-8107},
   MRCLASS = {20J06 (20F65 57T99)},
  MRNUMBER = {4640119},
MRREVIEWER = {Olympia\ Talelli},
       DOI = {10.1017/s1446788722000106},
       URL = {https://doi.org/10.1017/s1446788722000106},
}

@article {ffr,
    AUTHOR = {Fournier-Facio, Francesco and Rangarajan, Bharatram},
     TITLE = {Ulam stability of lamplighters and {T}hompson groups},
   JOURNAL = {Math. Ann.},
  FJOURNAL = {Mathematische Annalen},
    VOLUME = {389},
      YEAR = {2024},
    NUMBER = {3},
     PAGES = {2469--2497},
      ISSN = {0025-5831,1432-1807},
   MRCLASS = {20F65 (03C20 20J05 22D05 46L10)},
  MRNUMBER = {4753068},
MRREVIEWER = {Martyn\ Quick},
       DOI = {10.1007/s00208-023-02708-5},
       URL = {https://doi.org/10.1007/s00208-023-02708-5},
}

@article {ffps,
    AUTHOR = {Franceschini, Federico and Frigerio, Roberto and Pozzetti,
              Maria Beatrice and Sisto, Alessandro},
     TITLE = {The zero norm subspace of bounded cohomology of acylindrically
              hyperbolic groups},
   JOURNAL = {Comment. Math. Helv.},
  FJOURNAL = {Commentarii Mathematici Helvetici. A Journal of the Swiss
              Mathematical Society},
    VOLUME = {94},
      YEAR = {2019},
    NUMBER = {1},
     PAGES = {89--139},
      ISSN = {0010-2571,1420-8946},
   MRCLASS = {20J06 (20F65 20F67 57M07 57M50)},
  MRNUMBER = {3941468},
MRREVIEWER = {Ian\ J.\ Leary},
       DOI = {10.4171/CMH/456},
       URL = {https://doi.org/10.4171/CMH/456},
}

@book {frigerio,
    AUTHOR = {Frigerio, Roberto},
     TITLE = {Bounded cohomology of discrete groups},
    SERIES = {Mathematical Surveys and Monographs},
    VOLUME = {227},
 PUBLISHER = {American Mathematical Society, Providence, RI},
      YEAR = {2017},
     PAGES = {xvi+193},
      ISBN = {978-1-4704-4146-3},
   MRCLASS = {57T10 (20J06 37C85 55N10 57M07 57N16)},
  MRNUMBER = {3726870},
MRREVIEWER = {Clara\ L\"oh},
       DOI = {10.1090/surv/227},
       URL = {https://doi.org/10.1090/surv/227},
}

@article {gromov,
    AUTHOR = {Gromov, Michael},
     TITLE = {Volume and bounded cohomology},
   JOURNAL = {Inst. Hautes \'Etudes Sci. Publ. Math.},
  FJOURNAL = {Institut des Hautes \'Etudes Scientifiques. Publications
              Math\'ematiques},
    NUMBER = {56},
      YEAR = {1982},
     PAGES = {5--99},
      ISSN = {0073-8301,1618-1913},
   MRCLASS = {53C20 (53C21 57R99 58E99)},
  MRNUMBER = {686042},
MRREVIEWER = {Karsten\ Grove},
       URL = {http://www.numdam.org/item?id=PMIHES_1982__56__5_0},
}

@article {guichardet:dgthm,
    AUTHOR = {Guichardet, Alain},
     TITLE = {Sur la cohomologie des groupes topologiques. {II}},
   JOURNAL = {Bull. Sci. Math. (2)},
  FJOURNAL = {Bulletin des Sciences Math\'ematiques. 2e S\'erie},
    VOLUME = {96},
      YEAR = {1972},
     PAGES = {305--332},
      ISSN = {0007-4497},
   MRCLASS = {22D10},
  MRNUMBER = {340464},
MRREVIEWER = {B.\ E.\ Johnson},
}

@article {gurarij,
    AUTHOR = {Gurari\u{i}, V. I.},
     TITLE = {Differential properties of the convexity moduli of {B}anach
              spaces},
   JOURNAL = {Mat. Issled.},
  FJOURNAL = {Akademiya Nauk Moldavsko\u i\ SSR. Institut Matematiki s
              Vychislitel\cprime nym Tsentrom. Matematicheskie
              Issledovaniya},
    NUMBER = {1},
    VOLUME = {2},
      YEAR = {1967},
     PAGES = {141--148},
      ISSN = {0542-9994},
   MRCLASS = {46.10},
  MRNUMBER = {211245},
MRREVIEWER = {M.\ M.\ Day},
}

@article {ivanov,
    AUTHOR = {Ivanov, N. V.},
     TITLE = {The second bounded cohomology group},
   JOURNAL = {Zap. Nauchn. Sem. Leningrad. Otdel. Mat. Inst. Steklov.
              (LOMI)},
  FJOURNAL = {Zapiski Nauchnykh Seminarov Leningradskogo Otdeleniya
              Matematicheskogo Instituta imeni V. A. Steklova Akademii Nauk
              SSSR (LOMI)},
    VOLUME = {167},
      YEAR = {1988},
     PAGES = {117--120, 191},
      ISSN = {0373-2703},
   MRCLASS = {55N20 (20J99 53C20)},
  MRNUMBER = {964260},
MRREVIEWER = {Marek\ Golasi\'nski},
       DOI = {10.1007/BF01099246},
       URL = {https://doi.org/10.1007/BF01099246},
}

@book {johnson,
    AUTHOR = {Johnson, Barry Edward},
     TITLE = {Cohomology in {B}anach algebras},
    SERIES = {Memoirs of the American Mathematical Society},
    VOLUME = {No. 127},
 PUBLISHER = {American Mathematical Society, Providence, RI},
      YEAR = {1972},
     PAGES = {iii+96},
   MRCLASS = {46M20 (46H25)},
  MRNUMBER = {374934},
MRREVIEWER = {Alain\ Guichardet},
}

@article {loeh:anote,
    AUTHOR = {L\"oh, Clara},
     TITLE = {A note on bounded-cohomological dimension of discrete groups},
   JOURNAL = {J. Math. Soc. Japan},
  FJOURNAL = {Journal of the Mathematical Society of Japan},
    VOLUME = {69},
      YEAR = {2017},
    NUMBER = {2},
     PAGES = {715--734},
      ISSN = {0025-5645,1881-1167},
   MRCLASS = {55N35 (20J06)},
  MRNUMBER = {3638282},
MRREVIEWER = {Masaki\ Kameko},
       DOI = {10.2969/jmsj/06920715},
       URL = {https://doi.org/10.2969/jmsj/06920715},
}

@article {matsumoto-morita,
    AUTHOR = {Matsumoto, Shigenori and Morita, Shigeyuki},
     TITLE = {Bounded cohomology of certain groups of homeomorphisms},
   JOURNAL = {Proc. Amer. Math. Soc.},
  FJOURNAL = {Proceedings of the American Mathematical Society},
    VOLUME = {94},
      YEAR = {1985},
    NUMBER = {3},
     PAGES = {539--544},
      ISSN = {0002-9939,1088-6826},
   MRCLASS = {55N99 (57T99)},
  MRNUMBER = {787909},
       DOI = {10.2307/2045250},
       URL = {https://doi.org/10.2307/2045250},
}

@article{milman,
 author = {Milman, D.},
 title = {On some criteria for the regularity of spaces of the type ({B})},
 fjournal = {Comptes Rendus (Doklady) de l'Acad{\'e}mie des Sciences de l'URSS, Nouvelle S{\'e}rie},
 journal = {C. R. (Dokl.) Acad. Sci. URSS, n. Ser.},
 issn = {1819-0723},
 volume = {20},
 pages = {243--246},
 year = {1938},
 language = {English},
 zbMATH = {3031612},
 Zbl = {0019.41601}
}

@book {monod,
    AUTHOR = {Monod, Nicolas},
     TITLE = {Continuous bounded cohomology of locally compact groups},
    SERIES = {Lecture Notes in Mathematics},
    VOLUME = {1758},
 PUBLISHER = {Springer-Verlag, Berlin},
      YEAR = {2001},
     PAGES = {x+214},
      ISBN = {3-540-42054-1},
   MRCLASS = {46M20 (22D12 22E41 43A07 46H25)},
  MRNUMBER = {1840942},
MRREVIEWER = {Andrzej\ Kozlowski},
       DOI = {10.1007/b80626},
       URL = {https://doi.org/10.1007/b80626},
}

@article {monod-nariman,
    AUTHOR = {Monod, Nicolas and Nariman, Sam},
     TITLE = {Bounded and unbounded cohomology of homeomorphism and
              diffeomorphism groups},
   JOURNAL = {Invent. Math.},
  FJOURNAL = {Inventiones Mathematicae},
    VOLUME = {232},
      YEAR = {2023},
    NUMBER = {3},
     PAGES = {1439--1475},
      ISSN = {0020-9910,1432-1297},
   MRCLASS = {57S05 (20J06)},
  MRNUMBER = {4588567},
MRREVIEWER = {Bena\ Tshishiku},
       DOI = {10.1007/s00222-023-01181-w},
       URL = {https://doi.org/10.1007/s00222-023-01181-w},
}

@article {ozawa,
    AUTHOR = {Ozawa, Narutaka},
     TITLE = {Noncommutative real algebraic geometry of {K}azhdan's property
              ({T})},
   JOURNAL = {J. Inst. Math. Jussieu},
  FJOURNAL = {Journal of the Institute of Mathematics of Jussieu. JIMJ.
              Journal de l'Institut de Math\'ematiques de Jussieu},
    VOLUME = {15},
      YEAR = {2016},
    NUMBER = {1},
     PAGES = {85--90},
      ISSN = {1474-7480,1475-3030},
   MRCLASS = {22D10 (20F10 22D15 46L89)},
  MRNUMBER = {3427595},
MRREVIEWER = {Alain\ Valette},
       DOI = {10.1017/S1474748014000309},
       URL = {https://doi.org/10.1017/S1474748014000309},
}

@article {pettis,
    AUTHOR = {Pettis, B. J.},
     TITLE = {A proof that every uniformly convex space is reflexive},
   JOURNAL = {Duke Math. J.},
  FJOURNAL = {Duke Mathematical Journal},
    VOLUME = {5},
      YEAR = {1939},
    NUMBER = {2},
     PAGES = {249--253},
      ISSN = {0012-7094,1547-7398},
   MRCLASS = {99-04},
  MRNUMBER = {1546121},
       DOI = {10.1215/S0012-7094-39-00522-3},
       URL = {https://doi.org/10.1215/S0012-7094-39-00522-3},
}

@article {shalom,
    AUTHOR = {Shalom, Yehuda},
     TITLE = {Rigidity of commensurators and irreducible lattices},
   JOURNAL = {Invent. Math.},
  FJOURNAL = {Inventiones Mathematicae},
    VOLUME = {141},
      YEAR = {2000},
    NUMBER = {1},
     PAGES = {1--54},
      ISSN = {0020-9910,1432-1297},
   MRCLASS = {22E40 (20E08 20F65 20G10)},
  MRNUMBER = {1767270},
MRREVIEWER = {Alain\ Valette},
       DOI = {10.1007/s002220000064},
       URL = {https://doi.org/10.1007/s002220000064},
}

@article {soma:free,
    AUTHOR = {Soma, Teruhiko},
     TITLE = {Existence of non-{B}anach bounded cohomology},
   JOURNAL = {Topology},
  FJOURNAL = {Topology. An International Journal of Mathematics},
    VOLUME = {37},
      YEAR = {1998},
    NUMBER = {1},
     PAGES = {179--193},
      ISSN = {0040-9383},
   MRCLASS = {55N35 (53C20)},
  MRNUMBER = {1480885},
MRREVIEWER = {Yuli\ B.\ Rudyak},
       DOI = {10.1016/S0040-9383(97)00002-5},
       URL = {https://doi.org/10.1016/S0040-9383(97)00002-5},
}

@article {soma:srf,
    AUTHOR = {Soma, Teruhiko},
     TITLE = {The zero-norm subspace of bounded cohomology},
   JOURNAL = {Comment. Math. Helv.},
  FJOURNAL = {Commentarii Mathematici Helvetici},
    VOLUME = {72},
      YEAR = {1997},
    NUMBER = {4},
     PAGES = {582--592},
      ISSN = {0010-2571,1420-8946},
   MRCLASS = {57N65 (55N35)},
  MRNUMBER = {1600150},
MRREVIEWER = {Darryl\ McCullough},
       DOI = {10.1007/s000140050035},
       URL = {https://doi.org/10.1007/s000140050035},
}

\end{document}